\theoremstyle{plain}
\theoremstyle{definition}
\newtheorem{aussage}{Aussage}[section]
\newtheorem{theorem}[aussage]{Theorem}
\newtheorem{lemma}[aussage]{Lemma}
\numberwithin{equation}{section}
\newcommand{\bF}{\mathbb{F}}
\newcommand{\bN}{\mathbb{N}}
\newcommand{\bQ}{\mathbb{Q}}
\newcommand{\bR}{\mathbb{R}}
\newcommand{\bZ}{\mathbb{Z}}
\newcommand{\cB}{\mathcal{B}}
\newcommand{\cD}{\mathcal{D}}
\newcommand{\cN}{\mathcal{N}}
\newcommand{\Aut}{\text {Aut}}
\newcommand{\Hom}{\text {Hom}}
\newcommand{\lbracket}{\langle}
\newcommand{\rbracket}{\rangle}
\begin{document}

%\topmatter

\title{EMBEDDINGS OF BRAID GROUPS INTO MAPPING CLASS GROUPS AND THEIR HOMOLOGY} 

% Embeddings of braid into mapping class groups and their homology

\bigskip
\bigskip

\author{Carl-Friedrich B\"odigheimer and Ulrike Tillmann} 

\bigskip

%\date{29. February 2012}
\date{}

%\bigskip

\maketitle

%%%%%%%%%%%%%%%%%%%%%%%%%%%%%%%%%%%%%%%%%%%%%%%%%%%%%%%%%%%%%%%%%%%%%%%%%%%%%%%
\bigskip

\begin{abstract}
\noindent
We construct several families of
embeddings  of braid groups into mapping 
class groups of orientable and non-orientable surfaces and prove 
that they induce the trivial map in stable homology in the  orientable case, 
but not so in the  non-orientable case. 
We show that these embeddings are non-geometric in the sense that the 
standard generators of the braid group are not mapped to Dehn twists.
\end{abstract}

\bigskip

\bigskip
%%%%%%%%%%%%%%%%%%%%%%%%%%%%%%%%%%%%%%%%%%%%%%%%%%%%%%%%%%%%%%%%%%%%%%%%%%%%%%%%%
\section{Introduction}

\indent
Let $\Gamma_{g, n}$ denote the mapping class group of an oriented surface 
$\Sigma_{g,n}$ of genus $g$ with $n$ parametrized boundary components, i.e.,
$\Gamma_{g,n}$ is the group of connected components of the group of orientation
preserving diffeomorphisms of $\Sigma_{g,n}$ that fix the boundary point-wise.
For a simple closed curve $a$ on the surface, let $D_a$ denote
the Dehn twist around $a$. When two simple closed curves $a$ and $b$
intersect in one point the associated Dehn twists satisfy the braid relation
$D_a D_b D_a = D_b D_a D_b$, and if they do not intersect, the corresponding 
Dehn twists commute $D_a D_b = D_b D_a$.

%%%%%%%%%%%%%%%%%%%%%%%%%%%%%%%%%%%%%%%%%%%
%\vskip .4in
%\epsfxsize=4truein
%\centerline{\epsfbox{Figure-1.ps}}
%\vskip .1in

\begin{figure}[!htbp!]
	\centering
	\input{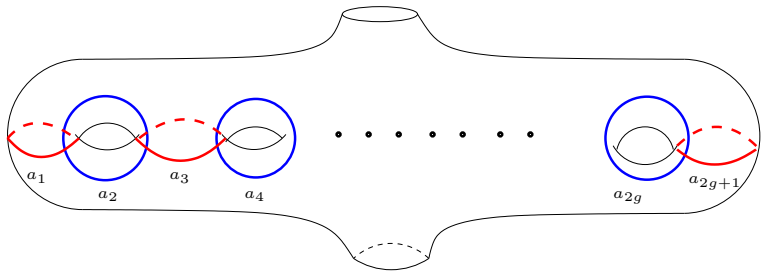}
	\caption{\em The standard geometric embedding $\phi: B_{2g+2} \to \Gamma _{g,2}$.}
\end{figure}

\bigskip
%%%%%%%%%%%%%%%%%%%%%%%%%%%%%%%%%%%%%%%%%%%%%%%%%%%%%%%%%%%%%%%%%%%%%%%%%%%%%%%%%%%%%

\noindent
Thus a chain of $n$ interlocking simple closed curves 
$a_1, \dots, a_n$ on some surface 
defines a map from the braid 
group $B_{n+1}$ on $n+1$ strands into the mapping class
group of a subsurface $\Sigma$ containing the union of theses curves; 
these mapping classes
fix the boundary of $\Sigma$ point-wise. 
The smallest such subsurface $\Sigma $ is a 
neighbourhood of the union of the curves. 
When $n=2g+1$ is odd, this is $\Sigma_{g,2}$, and
when $n=2g$ is even, this is $\Sigma _{g,1}$. 
Thus we have homomorphisms of groups

\begin{equation}
\phi: B_{2g+2} \longrightarrow \Gamma _{g,2} \quad \text { and }\quad \\
\phi: B_{2g+1} \longrightarrow \Gamma _{g,1}.
%\tag 1.1
\end{equation}

\noindent
These are injections by a theorem of Birman and Hilden [BH1], [BH2].

\medskip

Wajnryb [W1] calls such embeddings 
that send the standard generators of the braid group 
to Dehn twists {\it geometric}. He 
asks in [W2]  whether there are non-geometric embeddings.
The first example of such a non-geometric embedding was given by 
Szepietowski [S]. Our first goal in this paper 
is to produce many more such non-geometric embeddings
and show that they are ubiquitous.  
We construct these in section 2 and prove in section 3 that they are 
non-geometric.

\medskip

Motivated by a conjecture of Harer and 
following some ideas of F.R. Cohen [C], in [SoT] and [SeT]
it was shown that the geometric embedding $\phi$ induces the trivial map
in stable homology, that is the map in homology is zero in positive degrees
as long as the genus of the underlying surface is large enough relative 
to the degree.  
Our second goal here is to show that this is also the case 
for the non-geometric embeddings constructed in section 2. 
(For one of these maps, this answers a question left open in [SoT].) 
To this purpose
we show in section 4 how our embeddings from 
section 2 induce maps of algebras over an $E_2$-operad, and deduce
in section 5
that all of them  induce the trivial map in stable homology.
Furthermore, while it may be expected that all maps from a braid group 
to the mapping class group of an orientable surface will induce the trivial 
map on stable homology, we show that
this is not true for embeddings of a
braid group into the mapping class group of a non-orientable surface 
by explicitly computing the image of one such embedding in stable homology.  
Finally, in section 6,  we analyse the induced maps in unstable homology.
Only partial results are obtained here. In particular, 
it remains an open question whether $\phi$
induces the trivial map in unstable homology for
field coefficients.

\bigskip
%%%%%%%%%%%%%%%%%%%%%%%%%%%%%%%%%%%%%%%%%%%%%%%%%%%%%%%%%%%%%%%%%%%%%
{\bf Acknowledgement.}
We would like to thank Blazej Szepietowski for sending his paper and 
Mustafa Korkmaz for e-mail correspondence.

\bigskip
%%%%%%%%%%%%%%%%%%%%%%%%%%%%%%%%%%%%%%%%%%%%%%%%%%%%%%%%%%%%%%%%%%%%%%%%%
\section{Non-geometric embeddings}

We will construct various injections of braid groups into mapping class groups. 
All embeddings that we know of, geometric or not, 
initially start with the standard 
identification of the braid groups
with mapping class groups. The pure braid group on $g$ strands 
is the mapping class group
$\Gamma _{0,1}^g$ of a disk with $g$ marked points, 
and the pure ribbon braid group is the mapping
class group $\Gamma _{0,g+ 1} \simeq \bZ^g \times \Gamma _{0,1}^g$ of a disk 
with $g$ holes the boundary of which are 
parametrised. The factors of $\bZ$ correspond to the Dehn twists around the 
boundary circles of the holes. Similarly, the braid group $B_g$ can be 
identified with the mapping class group $\Gamma _{0, 1} ^ {(g)}$
of a disk  with
$g$ punctures (or $g$ unordered marked points),
 and the ribbon braid group $\bZ \wr B_g$ with the mapping class group 
$\Gamma_{0, (g), 1}$ of the disk with $g$ unordered holes for which
the underlying diffeomorphisms may in a parametrisation 
preserving way interchange the  boundaries of the holes and fix 
the outer boundary curve point-wise.
% and here is
%a non-split extension $0 \to \Gamma _{0,g+1} \to \Gamma _{0 , (g),1} \to 
%\Sigma _g \to 0$.
Note that the resulting inclusion 

\begin{equation}
\gamma: B_g \hookrightarrow \bZ \wr B_g \simeq \Gamma _{0, (g), 1}
%\tag 2.1
\end{equation}

\noindent
maps the standard generator that interchanges the $i$-th and $(i+1)$-st 
strand in the braid group to half of the Dehn twists around a simple closed curve 
enclosing the $i$-th and $(i+1)$-st holes followed by a half Dehn 
twist around each of these holes in the opposite direction; see Figure 2.
%\footnote{It seems to us that $\gamma$ truly 
%deserves to be called  geometric
%but we do not want to confuse terminology. }

%\vskip .4in
%\epsfxsize=4truein
%\centerline{\epsfbox{fig2.eps}}
%\vskip .1in
%\hfil{ Figure 2: Image of a generator $\sigma \in B_g$ under $\gamma$.} \hfil

%%%%%%%%%%%%%%%%%%%%%%%%%%%%%%%%%%%%%%%%%%%%%%%%%%%%%%%%%%%%%%%%%%%%%%%%%%%%%%%%%
\begin{figure}[!htbp!]
	\centering
	\input{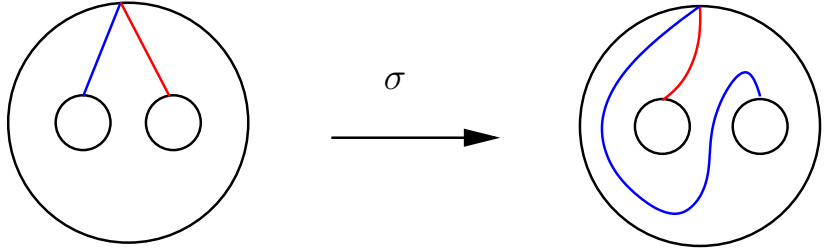}
	\caption{\em Image of a generator $\sigma \in B_g$ under $\gamma$.}
\end{figure}

%%%%%%%%%%%%%%%%%%%%%%%%%%%%%%%%%%%%%%%%%%%%%%%%%%%%%%%%%%%%%%%%%%%%%%%%%%%%%%%%%

\bigskip

Clearly, any genus zero subsurface $\Sigma _{0, g+1}$
of a surface $\Sigma$  defines for us a map 
from the pure ribbon braid group $\Gamma_{0,g+1}$ into the mapping 
class group of $\Sigma$. Not all such maps, however, can be extended to the 
ribbon braid group $\Gamma _{0, (g),1}$. Below we explore various 
constructions of surfaces from $\Sigma_{0,g+1}$ that allow such an extension.

\bigskip
%%%%%%%%%%%%%%%%%%%%%%%%%%%%%%%%%%%%%%%%%%%%%%%%%%%
\subsection{Mirror construction.} 

Our first example of a non-geometric embedding
was also considered in [SoT]. 

\medskip

We double the disk with $g$
holes by reflecting it in a plane containing the boundary circles of the
holes to obtain an oriented  surface $\Sigma_{g-1, 2}$ 
as indicated in Figure 3, and  extend diffeomorphisms of the disk
with holes to $\Sigma_{g-1,2}$ by reflection in the plane. This defines
a map on mapping class groups
$$
R: \Gamma_{0, (g),1} \longrightarrow \Gamma_{g-1,2}.
$$

\noindent
Precomposing $R$ with
the natural inclusion $B_g \subset
\Gamma _{0, (g),1}$ defines
the map

\begin{equation}
B_g \overset \gamma \longrightarrow \Gamma_{0, (g),1} \overset R \longrightarrow \Gamma_{g-1,2}.
%\tag 2.2
\end{equation}

%\vskip .4in
%\epsfxsize=3truein
%\centerline{\epsfbox{fig3.eps}}
%\vskip .1in

%\hfil{ Figure 3: Mirror construction $R$ and subgroup $\lbracket
%a_1, \dots , a_g \rbracket \, \, \, \subset \pi _1 (\Sigma _{g-1,2})$.} \hfil

%%%%%%%%%%%%%%%%%%%%%%%%%%%%%%%%%%%%%%%%%%%%%%%%%%%%%%%%%%%%%%%%%%%%%%%%%%%%%%%%%%
\begin{figure}[!htbp!]
	\centering
	\input{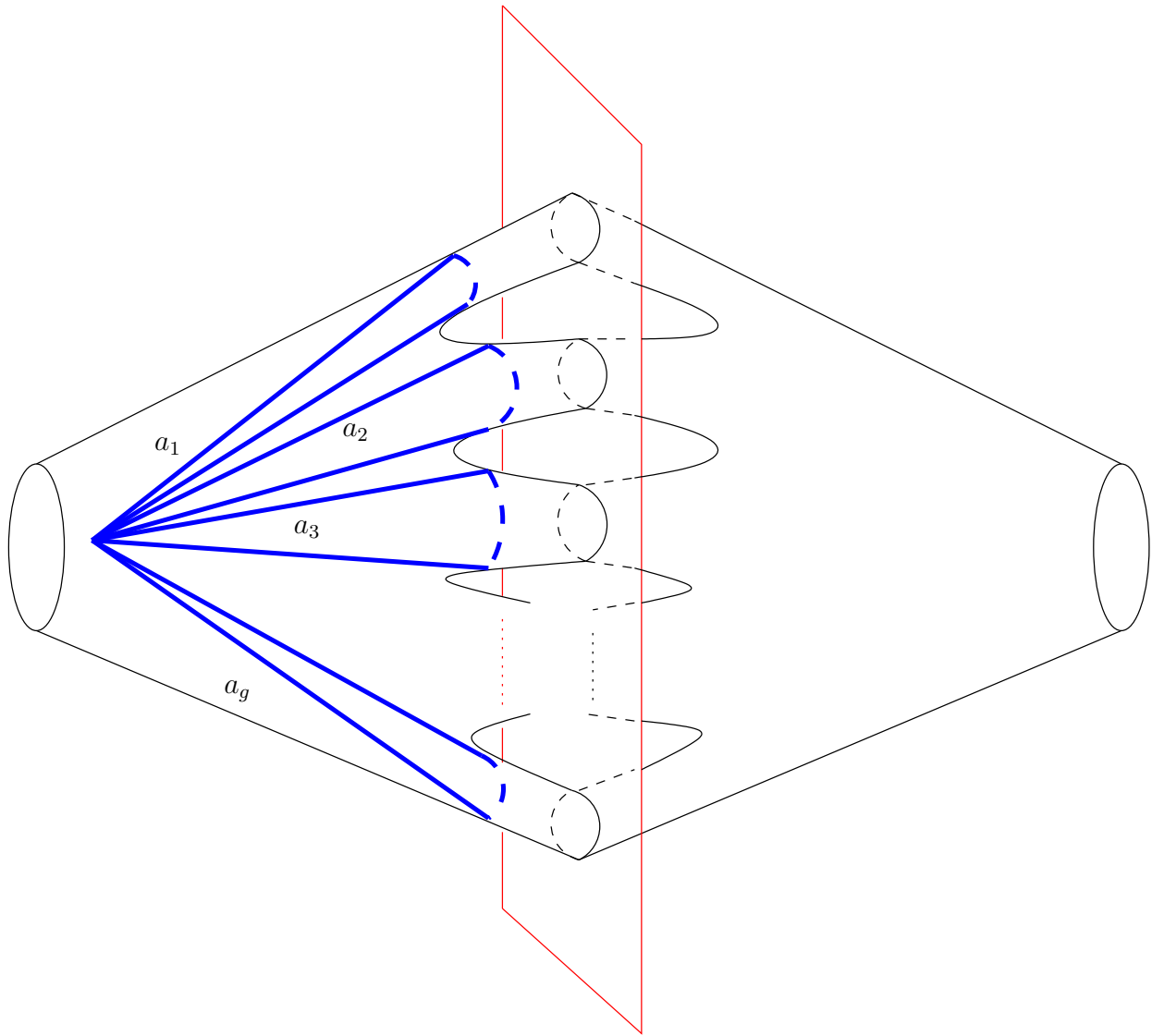}
	\caption{\em Mirror construction $R$ and subgroup 
$\lbracket a_1, \dots , a_g \rbracket \subset \pi_1(\Sigma _{g-1,2})$.}
\end{figure}

%%%%%%%%%%%%%%%%%%%%%%%%%%%%%%%%%%%%%%%%%%%%%%%%%%%%%%%%%%%%%%%%%%%%%%%%%%%%%%%%%%

%Lemma 2.1
\begin{lemma}
{\it The composition $R \circ \gamma$ is an injection.}
\end{lemma}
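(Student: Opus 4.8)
The plan is to prove injectivity by letting $R\circ\gamma$ act on the fundamental group of $\Sigma_{g-1,2}$ and identifying that action with the classical Artin representation of the braid group. Fix a basepoint on the outer boundary circle of the upper copy of the disk-with-holes sitting inside $\Sigma_{g-1,2}$; since every mapping class in $\Gamma_{g-1,2}$ fixes the boundary pointwise, we get a homomorphism $\Gamma_{g-1,2}\to\Aut(\pi_1(\Sigma_{g-1,2}))$. By construction $R$ sends a diffeomorphism $f$ of the disk-with-holes to the map that equals $f$ on the upper copy, so $R\circ\gamma(\sigma)$ preserves the upper copy and restricts there to $\gamma(\sigma)$ itself. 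Hence $R\circ\gamma$ acts on the free group $F:=\langle a_1,\dots,a_g\rangle=\pi_1(\text{upper copy})$, where $a_i$ is the loop around the $i$-th hole as in Figure~3, in exactly the way $\gamma$ does. The inclusion of the upper copy is $\pi_1$-injective (it is an incompressible subsurface, its inner boundary circles being essential in $\Sigma_{g-1,2}$), so $F\cong F_g$ sits inside $\pi_1(\Sigma_{g-1,2})$ and is preserved by the action.

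It therefore suffices to show that the action of $B_g$ on $F$ induced by $\gamma$ is faithful. For this I would use the forgetful homomorphism $\Gamma_{0,(g),1}\to\Gamma_{0,1}^{(g)}=B_g$ that caps each hole with a once-punctured disk (equivalently the projection $\bZ\wr B_g\to B_g$ killing the framings). By the description of $\gamma$ recalled above, the composite $B_g\overset{\gamma}{\longrightarrow}\Gamma_{0,(g),1}\to B_g$ is the identity, so $\gamma$ is a section of this map. Capping the holes induces a $B_g$-equivariant isomorphism $\pi_1(\text{disk with }g\text{ holes})\cong\pi_1(\text{disk with }g\text{ punctures})$ carrying each $a_i$ to the loop around the $i$-th puncture. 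On the punctured disk $B_g$ acts by the Artin representation $a_i\mapsto a_i a_{i+1} a_i^{-1},\ a_{i+1}\mapsto a_i,\ a_j\mapsto a_j\ (j\neq i,i+1)$, which is faithful by Artin's classical theorem. Transporting back along the isomorphism, the $\gamma$-action on $F$ is this same faithful representation.

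Combining the two steps, $R\circ\gamma$ acts on the subgroup $F\subset\pi_1(\Sigma_{g-1,2})$ by the faithful Artin representation, so its kernel is trivial and $R\circ\gamma$ is injective. The only point needing care is the equivariance in the middle step: one must check that the framing-correcting half-twists about the individual holes, which enter the definition of $\gamma(\sigma_i)$, act trivially on $\pi_1$ of the disk-with-holes (equivalently, die under capping), so that the residual action on $F$ is precisely the Hurwitz swap of the two holes. Once this is verified the faithfulness of $R\circ\gamma$ is immediate, and one sees that the role of the mirror construction is simply to realise $\gamma(\sigma_i)$ as a genuine mapping class of an oriented surface on which this action lives.
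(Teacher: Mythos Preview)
Your proof is correct and follows essentially the same idea as the paper's: reduce injectivity of $R\circ\gamma$ to the faithfulness of Artin's representation by looking at the induced action on the free subgroup $\langle a_1,\dots,a_g\rangle\subset\pi_1(\Sigma_{g-1,2})$.

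The only noteworthy difference is in how the bookkeeping is arranged. You use that the image of $R\circ\gamma$ preserves the upper disk-with-holes as a subsurface and hence genuinely acts on $F_g$ by automorphisms; this lets you compare directly with the Artin action. The paper instead passes through all of $\Gamma_{g-1,2}$, whose elements need not preserve $F_g$, and therefore uses a retraction $F_{2g-1}\to F_g$ (sending the $b_i$ to $1$) to land in $\Hom(F_g,F_g)$ as a map of sets before checking the diagram commutes. Your shortcut is legitimate and slightly cleaner for this particular map; the paper's version has the mild advantage that the right-hand column is defined on the whole mapping class group rather than just on the image of $R\circ\gamma$. Your closing remark about the half-twists around the individual holes acting trivially on $\pi_1$ is exactly the verification needed, and it is immediate once phrased via the capping map as you do.
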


\begin{proof} 
We are going to detect the elements in the image by their action 
on the fundamental group of the underlying surface.
Recall, the action
of the braid group on the fundamental group of the disk with $g$ holes
defines Artin's inclusion [A] of the braid group into the automorphism
group of 
$\pi_1 (\Sigma_{0,g+1} ) = F_g = \, \lbracket a_1, \dots, a_g \rbracket$, 
a free group of rank $g$:

$$
A: B_g \hookrightarrow \Aut(F_g).
$$

\noindent
The fundamental group of 
%the disk with $g$ holes removed
$\Sigma_{0,g+1}$
in turn injects into the fundamental group of $\Sigma _{g-1,2}$. 
Indeed, each standard
generator of $F_g = \, \lbracket a_1, \dots , a_g \rbracket$ 
is mapped to  the corresponding generator of
$\pi_1 (\Sigma _{g-1,2})$ which is the  free group 
$F_{2g-1} = \, \lbracket a_1, b_1, \dots, b_{g-1}, a_{g}\rbracket$
of rank $2g-1$; see Figure 3. 

\medskip

The action of diffeomorphisms on the fundamental group of the surface
induces a group homomorphisms 
$\Gamma_{g-1,2} \to \Aut(F_{2g-1})$,
 and by 
restricting to the subgroup $F_g = \, \lbracket a_1, \dots, a_g \rbracket$ a
map to the set $\Hom(F_g, F_{2g-1})$. We follow this map by the projection map 
to the monoid $\Hom(F_g, F_g)$, 
induced from the group homomorphisms $F_{2g-1} \to F_g$ that maps each 
$a_i$ to itself and each $b_i$ to the identity element.
We have the following commutative diagram; 
note that the right vertical arrow is only a map of sets.

%%%%%%%%%%%%%%%%%%%%%%%%%%%%%%%%%%%%%%%%%%%%%%%%%%%%%%
$$
\CD
B_g       @> R \circ \gamma >> \Gamma _{g-1,2}   	\\
@V A VV		                @VVV		        \\
\Aut(F_g)  @>>>	                \Hom (F_g, F_g).
\endCD
$$
%%%%%%%%%%%%%%%%%%%%%%%%%%%%%%%%%%%%%%%%%%%%%%%%%%%%%%%

\medskip

\noindent
As $A$ and the bottom horizontal map are injective this 
shows that $R \circ \gamma$ is injective.
\end{proof}

\bigskip
%%%%%%%%%%%%%%%%%%%%%%%%%%%%%%%%%%%%%%%%%%%%%%%%%%%%%%%%%%%%
\subsection{Szepietowski's construction.}  

We next recall the construction from [S].
Starting with $\Sigma_{0,g+1}$, the disk with $g$ holes,
we  glue to  the boundary of each hole
a M\"obius band $N_{1,1}$ so that 
the resulting surface 
is a non-orientable surface $N_{g,1}$ of genus $g$ with one
parametrised boundary component. 
Any diffeomorphisms $f : \Sigma_{0,g+1} \to \Sigma_{0,g+1}$ which fixes the boundary pointwise
can be extended across the boundary by the identity 
of $N_{1,1} \sqcup \ldots \sqcup N_{1,1}$ and gives thus a diffeomorphism of $N_{g,1}$. 
This defines (via $\gamma$) a homomorphism from the pure braid group to the mapping class group of $N_{g,1}$.
This latter homomorphism can be extended to a map from the whole braid group as follows:
extend a diffeomorphism $f$ which permutes the $g$ inner boundaries by the corresponding permutation diffeomorphism
of $N_{1,1} \sqcup \ldots \sqcup N_{1,1}$.  
This defines 

\begin{equation}
\varphi: B_g \longrightarrow \cN_{g,1},
%\tag 2.3
\end{equation}

\noindent
where $\cN_{g,1}$ denotes the mapping class group of $N_{g,1}$.
By a result of Birman and Chillingworth [BC1], [BC2], extended to surfaces with 
boundary in [S], the lift of diffeomorphisms of a non-orientable surface to its
double cover induces an injection of mapping class groups: 

$$
L: \cN_{g,1} \hookrightarrow \Gamma_{g-1, 2}.
$$

%Lemma  2.2
\begin{lemma}
{\it The composition $L \circ \varphi$ is an injection.}
\end{lemma}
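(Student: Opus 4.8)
The plan is to show that $L \circ \varphi$ is injective. Since $L$ is an injection by the result of Birman--Chillingworth cited above, it suffices to prove that $\varphi \colon B_g \to \cN_{g,1}$ is itself injective; the composition with an injection is then automatically injective.

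To prove injectivity of $\varphi$, I would adapt the strategy already used for Lemma~2.1, namely detecting elements of the image by their action on the fundamental group of the underlying surface. First I would note that the Möbius-band gluing comes with a natural collapse: there is a retraction $N_{g,1} \to \Sigma_{0,g+1}$ that collapses each glued $N_{1,1}$ back onto the boundary circle it was attached to, and on fundamental groups this induces a surjection $\pi_1(N_{g,1}) \twoheadrightarrow \pi_1(\Sigma_{0,g+1}) = F_g = \langle a_1, \dots, a_g\rangle$. The key observation is that a diffeomorphism $\varphi(\beta)$ of $N_{g,1}$ which restricts to the given diffeomorphism of $\Sigma_{0,g+1}$ and to the \emph{identity} on each Möbius band is compatible with this collapse, so that its induced automorphism of $\pi_1(N_{g,1})$ descends to precisely Artin's automorphism $A(\beta) \in \Aut(F_g)$ on the quotient $F_g$. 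This gives a commutative diagram, exactly analogous to the one in the proof of Lemma~2.1, in which the composite $B_g \xrightarrow{\varphi} \cN_{g,1} \to \Hom(F_g, F_g)$ (the second map being the action on $\pi_1$ followed by the collapse) equals Artin's injection $A$ followed by the inclusion $\Aut(F_g) \hookrightarrow \Hom(F_g, F_g)$. Since $A$ is injective, $\varphi$ must be injective too.

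The main obstacle, and the step requiring the most care, is verifying that the action of $\varphi(\beta)$ on $\pi_1(N_{g,1})$ genuinely descends to the Artin action on the quotient $F_g$. One must check two things: (i) that the collapse map $N_{g,1} \to \Sigma_{0,g+1}$ is well-defined up to homotopy and intertwines $\varphi(\beta)$ with the original disk-with-holes diffeomorphism, including the permutation behaviour when $\beta$ permutes boundaries; and (ii) that the generators $a_i$ of $\pi_1(\Sigma_{0,g+1})$ lift consistently so that the quotient automorphism really is $A(\beta)$ and not some twisted variant. Here one uses that the Möbius bands are attached by the identity (in the pure case) and by the corresponding permutation diffeomorphism (in general), so that the square-of-core-curve relations introduced by each $N_{1,1}$ are permuted compatibly and the collapse respects the braid action.

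An alternative, perhaps cleaner, route would avoid $\pi_1$ altogether and argue purely group-theoretically: the gluing construction factors $\varphi$ through the ribbon braid group as $B_g \xrightarrow{\gamma} \Gamma_{0,(g),1} \to \cN_{g,1}$, and one could instead compose $L \circ \varphi$ directly with the map $\cN_{g,1} \to \Gamma_{g-1,2} \to \Hom(F_{2g-1}, F_g)$ used in Lemma~2.1, thereby reusing that lemma's diagram wholesale. I would first attempt the direct $\pi_1$-argument sketched above, since it is self-contained, but keep the reduction-to-Lemma-2.1 approach in reserve as it requires checking only that the double cover $L$ is compatible with the same collapse and so inherits the injectivity detection already established.
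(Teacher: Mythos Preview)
Your overall strategy---reduce to injectivity of $\varphi$ via the Birman--Chillingworth injectivity of $L$, then detect $\varphi$ through the action on $\pi_1$---is exactly right and matches the paper's approach. But the specific mechanism you propose has a genuine topological obstruction: there is \emph{no} retraction $N_{g,1} \to \Sigma_{0,g+1}$ collapsing each M\"obius band onto the boundary circle it was attached to. The boundary circle of a M\"obius band is not a retract of the band, because the inclusion $\partial N_{1,1} \hookrightarrow N_{1,1}$ induces multiplication by $2$ on $\pi_1 \cong \bZ$, and that has no left inverse in $\bZ$. So the collapse map you build the argument on does not exist, and neither does the induced surjection on fundamental groups that you need for your commutative diagram.

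The paper avoids this by going in the \emph{opposite} direction: instead of projecting $\pi_1(N_{g,1})$ onto $\pi_1(\Sigma_{0,g+1})$, it uses the injection $\pi_1(\Sigma_{0,g+1}) = \langle a_1,\dots,a_g\rangle \hookrightarrow \pi_1(N_{g,1}) = \langle c_1,\dots,c_g\rangle$ given by $a_i \mapsto c_i^2$. This subgroup is invariant under $\varphi(B_g)$ (the diffeomorphisms in the image restrict to $\Sigma_{0,g+1}$ by construction), and on it the action is Artin's faithful representation; hence $\varphi$ is injective. Your idea can be salvaged purely algebraically---declare a group homomorphism $c_i \mapsto a_i$ and verify by hand that the braid action descends---but that map is not induced by any continuous map of surfaces, and the paper's inclusion-based argument both bypasses the obstruction and eliminates the compatibility checks you flagged as the main obstacle.
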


\begin{proof} We give an alternative  proof to the one in [S].
As in the proof of Lemma 2.1 we consider the action of the
mapping class group  on 
the fundamental group of the underlying surfaces.
In this case the fundamental group of the disk with $g$ holes
$\pi_1 (\Sigma _{0, g+1}) = F_g = \, \lbracket a_1, \dots, a_g\rbracket$  injects
into the fundamental group $\pi_1(N_{g,1}) =
F_g = \, \lbracket c_1, \dots , c_g \rbracket$
by sending $a_i$ to $2c_i$. 
As by [A] the action of the braid group
on the subgroup $\lbracket a_1, \dots , a_g\rbracket$ is faithful  
it will be so
on the whole group. Hence $\varphi$ is injective, and so is $L \circ \varphi$.
\end{proof}

\bigskip
%%%%%%%%%%%%%%%%%%%%%%%%%%%%%%%%%%%%%%%%%%%%%%%%%%%%%%%%%%%%%%%%%%%%
\subsection{Geometric embedding and orientation cover.} 

The closed non-orientable surface $N_{2g+1}$ can be obtained
by sewing a M\"obius band to the orientable surface $\Sigma _{g,1}$.

%Lemma 2.3
\begin{lemma}
{\it The inclusion of surfaces $\Sigma _{g,1} \subset N_{2g+1}$  
induces an inclusion of mapping class groups 
$$
\Gamma_{g,1} / \lbracket D_\partial \rbracket \, \,  \simeq \,\, \Gamma_g^1 \hookrightarrow \cN_{2g+1}.
$$
}
\end{lemma}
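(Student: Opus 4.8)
The plan is to realize the map in the lemma as the composite $\Phi\colon \Gamma_{g,1}\to\cN_{2g+1}$ obtained by extending a diffeomorphism of $\Sigma_{g,1}$ (fixing $\partial$ pointwise) by the identity across the glued M\"obius band $M$, and then to show that $\ker\Phi=\langle D_\partial\rangle$. The identification $\Gamma_{g,1}/\langle D_\partial\rangle\simeq\Gamma_g^1$ is the standard capping exact sequence $1\to\langle D_\partial\rangle\to\Gamma_{g,1}\to\Gamma_g^1\to1$ obtained by capping the boundary with a once-marked disk, so the whole content of the lemma is the statement $\ker\Phi=\langle D_\partial\rangle$; in particular $\Phi$ must send $D_\partial$ to the identity and be injective on the quotient.

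First I would pass to the orientation double cover, in the spirit of this subsection and using the Birman--Chillingworth injection $L\colon\cN_{2g+1}\hookrightarrow\Gamma_{2g}$ of [BC1],[BC2] for the closed cover $\widetilde{N_{2g+1}}=\Sigma_{2g}$ (here $\chi=2(1-2g)$ forces genus $2g$). Since the cover is trivial over the orientable piece $\Sigma_{g,1}$ and is an annulus $A$ over $M$, the surface $\Sigma_{2g}$ is two copies $\Sigma^{(1)},\Sigma^{(2)}$ of $\Sigma_{g,1}$ joined by the tube $A$, with orientation-reversing deck involution $\tau$ swapping the copies and flipping $A$. Because $L$ is injective it suffices to compute the kernel of $\psi:=L\circ\Phi$. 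Tracking the normalized lift of a class supported in $\Sigma_{g,1}$, I expect $\psi(f)$ to be the mapping class that equals $f$ on $\Sigma^{(1)}$, equals $\tau f\tau^{-1}$ on $\Sigma^{(2)}$, and is the identity on the tube $A$.

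The kernel computation then reduces to cutting $\Sigma_{2g}$ along the separating core curve $c$ of $A$. This yields the gluing homomorphism $G\colon\Gamma_{g,1}\times\Gamma_{g,1}\to\Gamma_{2g}$ whose image consists of the classes fixing $c$ together with each of its sides, and whose kernel is the infinite cyclic group $\langle(D_\partial,D_\partial^{-1})\rangle$ generated by the relation that the twist $D_c$ computed on either side of $c$ agrees. By construction $\psi(f)=G\big(f,\tau f\tau^{-1}\big)$. Since $\tau$ is orientation-reversing it conjugates a Dehn twist to its inverse, so $\tau D_\partial\tau^{-1}=D_\partial^{-1}$ and hence $\psi(D_\partial)=G(D_\partial,D_\partial^{-1})=1$; conversely $\psi(f)=1$ forces $(f,\tau f\tau^{-1})\in\ker G$, and reading off the first coordinate gives $f=D_\partial^{\,n}$. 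Thus $\ker\psi=\langle D_\partial\rangle$, and as $L$ is injective $\ker\Phi=\langle D_\partial\rangle$, which is exactly what we want.

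The main obstacle is the sign in the second paragraph: everything hinges on the orientation-reversing deck transformation sending $D_a$ to $D_a^{-1}$, for this is precisely what kills $\langle D_\partial\rangle$ (and nothing more) under $G$. Making this rigorous requires pinning down the normalization of the Birman--Chillingworth lift and verifying the explicit description of $\psi(f)$ above, together with the standard but slightly delicate fact that cutting along a separating curve into two pieces of genus $g\ge1$ gives $\ker G=\langle(D_\partial,D_\partial^{-1})\rangle$. Finally I would treat the low-genus case $g=1$ separately, where the capping sequence and the identification $\Gamma_{g,1}/\langle D_\partial\rangle\simeq\Gamma_g^1$ need the usual care.
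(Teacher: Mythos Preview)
Your proposal is correct and follows essentially the same route as the paper: both pass to the orientation double cover $\Sigma_{2g}$ via the Birman--Chillingworth injection $L$, identify the image of $f\in\Gamma_{g,1}$ as acting by $f$ on one half and by the deck-conjugate $\tau f\tau^{-1}$ on the other, and exploit that the orientation-reversing $\tau$ inverts Dehn twists. The only difference is cosmetic: where you invoke the standard fact $\ker G=\langle(D_\partial,D_\partial^{-1})\rangle$ for the gluing map $G\colon\Gamma_{g,1}\times\Gamma_{g,1}\to\Gamma_{2g}$, the paper argues directly that if $\bar x^{-1}=J\bar xJ$ with the two sides supported on opposite halves, the common support must lie in a collar of the separating curve, forcing $x\in\langle D_\partial\rangle$ --- which is of course exactly how one proves the kernel statement you quote. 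Your worry about $g=1$ is unnecessary; the argument (in either form) is uniform in $g\ge1$.
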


\noindent
Here $D_\partial$ denotes the Dehn twist
around the boundary curve in $\Sigma _{g,1}$. This is a central element
in $\Gamma_{g,1}$.
We identify the quotient group
with the mapping class group of a surface with a marked point
(or a surface with one parametrized boundary component).

\bigskip

\noindent
%%%%%%%%%%%%%%%%%%%%%%%%%%%%%%%%%%%%%%%%%%%%%%%%%%%%%%%%%%%%%%%%%%%%%%%
%\begin{remark} 
{\bf Remark.} This subgroup of $\cN_{2g+1}$ can be identified with the
point-wise stabiliser of the core of the M\"obius band that has been sewn  
on to $\Sigma _{g,1}$ to form $N_{2g+1}$. For $g>1$ this subgroup
has infinite index but has the same 
virtual cohomological dimension as  the whole group [I],   

$$
\mathrm{vcd} (\Gamma^1_g) = 4g-3 = \mathrm{vcd} (\cN_{2g+1}).
$$ 
%\end{remark}
%%%%%%%%%%%%%%%%%%%%%%%%%%%%%%%%%%%%%%%%%%%%%%%%%%%%%%%%%%%%%%%%%%%%%%%%

\bigskip

\begin{proof}
It is an elementary fact  that the Dehn twist around the boundary of a 
M\"obius strip is isotopic to the identity. 
This implies that 
$D_\partial$ is in the kernel of $\Gamma _{g,1} \to \cN_{2g+1}$.
To show that $D_\partial $ generates the kernel,
consider the composition  

$$
\Gamma _{g,1} \longrightarrow \cN_{2g+1} \overset L \hookrightarrow \Gamma _{2g}.
$$

\noindent
Let $J$ denote the fix-point free orientation reversing involution
of $\Sigma_{2g}$ with quotient $N_{2g+1}$. As in [BC], we embed $\Sigma _{2g}$
in $\bR^3$ symmetrically around the origin and take $J = -\mathrm{Id}$ to be the
reflection through the origin, see Figure 4.
The image of any
element $x \in \Gamma _{g,1}$ is the product of $\bar x$ with $J \bar x J$,
where $\bar x$ acts on the left side of the surface via $x$ and via the
identity on the right side.
So if $x$ is in the kernel then $\bar x^{-1} = J \bar x J$ in $\Gamma _{2g}$.
But $\bar x^{-1}$ can be represented by a diffeomorphism with support
entirely in the left half of the surface and $J \bar x J$ by a diffeomorphism 
with support entirely in the right half of the surface. 
As the diffeomorphisms are isotopic, so must
be their supports. Hence, $\bar x$ has a representing diffeomorphism
supported in a tubular neighbourhood of the boundary, i.e.
$x \in \, \lbracket D_\partial \rbracket$.
\end{proof}

%\vskip .4in
%\epsfxsize=4truein
%\centerline{\epsfbox{fig4.eps}}
%\vskip .1in

%\hfil{ Figure 4: The involution $J = -I$ in $\Bbb R^3$.} \hfil

%%%%%%%%%%%%%%%%%%%%%%%%%%%%%%%%%%%%%%%%%%%%%%%%%%%%%%%%%%%%%%%%%%%%%%%%%%%%%%%%%%
\begin{figure}[!htbp!]
	\centering
	\input{fig4.pstex_t}
	\caption{\em The involution $J = -\mathrm{Id}$ in $\bR^3$.}
\end{figure}
%%%%%%%%%%%%%%%%%%%%%%%%%%%%%%%%%%%%%%%%%%%%%%%%%%%%%%%%%%%%%%%%%%%%%%%%%%%%%%%%%%

Our third construction is the composition

\begin{equation}
L' \circ \phi: B_{2g}    \overset \phi     \hookrightarrow 
\Gamma_{g-1,2}                             \hookrightarrow
\Gamma_{g}^1                               \longrightarrow  
\cN_{2g+1}                \overset L       \longrightarrow
\Gamma_{2g}.
%\tag 2.4
\end{equation}

\medskip

\noindent
The unlabelled 
map  $\Gamma_{g-1,2} \to \Gamma_{g}^1$, which is induced by gluing a pair
of pants to the two boundary circles of $\Sigma_{g-1, 2}$, is an inclusion, see 
[PR]. Thus, again we have constructed an embedding.

\bigskip
%%%%%%%%%%%%%%%%%%%%%%%%%%%%%%%%%%%%%%%%%%%%%%%%%%%%%%%%%%%%%%%%%%%%%%%%%%%%%%%%%%%
\subsection{Geometric embedding and mirror construction.} 

Similarly to our third example, we may combine the geometric embedding 
$\phi: B_{2g} \to \Gamma _{g-1,2}$ 
with a mirror construction. For this first  glue a torus $\Sigma_{1,2}$ along 
one of its boundary circles to $\Sigma_{g-1,2}$ and embed the other boundary 
circle in the plane. Now double the resulting surface $\Sigma_{g,2}$
by reflection in the plane to yield a surface 
$\Sigma _{2g, 2}$. 
We leave it as an exercise to prove the following result.

%Lemma 2.4
\begin{lemma} 
{\it The composition 
$
B_{2g} \overset \phi \longrightarrow \Gamma_{g-1,2} \longrightarrow 
\Gamma_{g, 2} \overset {R'} \longrightarrow \Gamma _{2g,2}
$
is an injection.}
\end{lemma}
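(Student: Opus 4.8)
Write $\tau$ for the boundary circle of $\Sigma_{g,2}$ that is placed in the plane and along which we double, and let $J$ be the resulting orientation–reversing involution of $\Sigma_{2g,2}$, whose fixed-point set is exactly $\tau$ and whose quotient is $\Sigma_{g,2}$. By construction $R'$ sends a class $f$ to $f\cup\bar f$, where $\bar f=JfJ$ is the reflected diffeomorphism carried by the mirror copy. The plan is to proceed as in the third example up to $R'$, and then to analyse $R'$ by the support argument of Lemma 2.3.

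First I would dispose of the two maps preceding $R'$. The embedding $\phi$ is injective by Birman--Hilden, and the unlabelled map $\iota\colon\Gamma_{g-1,2}\to\Gamma_{g,2}$, obtained by gluing the torus $\Sigma_{1,2}$ along the boundary circle $\partial_2$ and extending by the identity, is the inclusion of an incompressible subsurface whose complementary piece is the glued-on torus $\Sigma_{1,2}$ (neither a disc nor a boundary-parallel annulus); hence $\iota$ is injective by Paris--Rolfsen [PR], exactly as for the pair of pants in the third example. Thus $\psi:=\iota\circ\phi\colon B_{2g}\hookrightarrow\Gamma_{g,2}$ is injective, and every element of its image has a representative supported in $\Sigma_{g-1,2}$, hence disjoint from a collar of $\tau$. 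It therefore remains to check that $R'$ is injective on the image of $\psi$.

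Next I would locate the kernel of $R'$ by the method of Lemma 2.3. Given $f$ with $R'(f)=1$, set $y=f\cup\mathrm{id}$, supported in the $\Sigma_{g,2}$-half; then $R'(f)=y\cdot JyJ$, so $y^{-1}=JyJ$ in $\Gamma_{2g,2}$. Now $y^{-1}$ has a representative supported in the left half and $JyJ$ one supported in the right half, and since the two are isotopic their supports must be isotopic, so $y$ is isotopic to a diffeomorphism supported in a collar of the shared circle $\tau$. The one genuinely new feature compared with Lemma 2.3 is that $J$ is not fixed-point free: its fixed locus is $\tau$, so the support is pushed onto $\tau$ rather than disappearing, and we conclude only that $\ker R'\subseteq\langle D_\tau\rangle$, the cyclic group generated by the Dehn twist about $\tau$. (Conversely $R'(D_\tau)=1$, because the half-twists about $\tau$ coming from $f$ and $\bar f$ carry opposite signs and cancel, so in fact $\ker R'=\langle D_\tau\rangle$.)

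Finally I would combine the two inputs. If $R'(\psi(\beta))=1$ then $\psi(\beta)\in\langle D_\tau\rangle$; but $\psi(\beta)$ is supported in $\Sigma_{g-1,2}$, a subsurface disjoint from the collar of $\tau$ and sharing no boundary circle with it, so the only element common to $\langle D_\tau\rangle$ and the image of $\psi$ is the identity. (Concretely, the centre of $B_{2g}$ maps under $\phi$ to $D_{\partial_1}D_{\partial_2}$ by the chain relation, and this is independent of $D_\tau$ in the free abelian group generated by the disjoint twists $D_{\partial_1},D_{\partial_2},D_\tau$, while a non-central braid cannot map to the central element $D_\tau^{k}$.) Hence $\psi(\beta)=1$, and injectivity of $\psi$ forces $\beta=1$. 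I expect the main obstacle to be the middle step: as already in Lemma 2.3, one must make precise that two isotopic diffeomorphisms with supports in complementary halves can be isotoped to have support in a collar of the separating circle; the remaining steps are a citation of [PR] and a disjointness argument.
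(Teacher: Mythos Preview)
The paper does not actually prove Lemma~2.4; it is explicitly ``left as an exercise''. So there is nothing to compare against except the general template set by the neighbouring lemmas. Your proposal is a correct solution and follows the pattern of Lemma~2.3 rather than that of Lemmas~2.1, 2.2 and~2.5. A few remarks:

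\smallskip
\textbf{Your approach.} Splitting the composition into (i)~$\phi$ injective by Birman--Hilden, (ii)~$\iota$ injective by [PR], and (iii)~$\ker R'=\langle D_\tau\rangle$ via the support argument of Lemma~2.3, then checking $\mathrm{im}\,\psi\cap\langle D_\tau\rangle=\{1\}$, is perfectly sound. The computation $R'(D_\tau)=1$ is correct: writing $R'(f)=\bar f\cdot J\bar f J$ with $\bar f=f\cup\mathrm{id}$, one has $\bar{D_\tau}=D_\tau$ in $\Gamma_{2g,2}$ and $J D_\tau J=D_\tau^{-1}$ since $J$ is orientation-reversing and fixes $\tau$. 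Your identification of the ``main obstacle'' is honest and matches the level of rigour the paper itself uses in Lemma~2.3.

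\smallskip
\textbf{The final step.} Your disjointness argument for $\mathrm{im}\,\psi\cap\langle D_\tau\rangle=\{1\}$ is right, but can be streamlined: cap $\tau$ with a disk to get $\Gamma_{g,2}\to\Gamma_{g,1}$, whose kernel is exactly $\langle D_\tau\rangle$. The resulting map $B_{2g}\to\Gamma_{g-1,2}\to\Gamma_{g,1}$ is still injective (the complement of $\Sigma_{g-1,2}$ in $\Sigma_{g,1}$ is $\Sigma_{1,1}$, so [PR] applies), and the claim follows. This avoids the slightly informal appeal to ``independence of disjoint twists'' in your parenthetical.

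\smallskip
\textbf{Alternative route.} Given that the paper proves Lemmas~2.1, 2.2 and~2.5 by the action on $\pi_1$ (Artin's faithful representation combined with injectivity of $\pi_1$ under subsurface inclusion and a retraction), the intended ``exercise'' may have been that method: $\pi_1(\Sigma_{g-1,2})\hookrightarrow\pi_1(\Sigma_{g,2})\hookrightarrow\pi_1(\Sigma_{2g,2})$, together with a projection killing the mirror and torus generators, recovers the faithful $B_{2g}$-action. Your route through Lemma~2.3 is equally valid and arguably more natural here since the construction $R'$ is a doubling just like~$L$.
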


\bigskip
%%%%%%%%%%%%%%%%%%%%%%%%%%%%%%%%%%%%%%%%%%%%%%%%%%%%%%%%%%%%%%%%%%%%%%
\subsection{Operadic embedding.}

The following embedding is well-known as part of an  $E_2$-operad action, 
compare section 4. 

\medskip

Starting with a disk $\Sigma _{0, g+1}$ with $g$ holes we glue
a torus $\Sigma _{1,1}$ with one disk removed to each of boundaries of the $g$
holes of the disk. The result is a surface $\Sigma _{g,1}$. As in the 
construction of $\varphi$ in (2.3) 
we may extend diffeomorphisms of the disk via the identity to the glued on tori
to define a map

\begin{equation}
\varphi^+: B_g \longrightarrow \Gamma_{g,1}.
%\tag 2.5
\end{equation}

\bigskip

%Lemma 2.5
\begin{lemma}
{\it The map $\varphi^+$ is an injection.}
\end{lemma}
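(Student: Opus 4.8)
The plan is to adapt the fundamental-group argument used for Lemmas 2.1 and 2.3. First I would record the relevant $\pi_1$ data. Gluing a once-punctured torus $\Sigma_{1,1}$ to each of the $g$ holes of $\Sigma_{0,g+1}$ produces $\Sigma_{g,1}$, whose fundamental group (with basepoint on the outer boundary) is free of rank $2g$, say $F_{2g} = \langle x_1, y_1, \dots, x_g, y_g\rangle$, where $x_i, y_i$ are the two core curves of the $i$-th handle. The inclusion $\Sigma_{0,g+1} \hookrightarrow \Sigma_{g,1}$ is $\pi_1$-injective because $\Sigma_{0,g+1}$ is an incompressible subsurface: each hole boundary now bounds a once-punctured torus, not a disk. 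Under this inclusion the generator $a_i \in \pi_1(\Sigma_{0,g+1}) = F_g = \langle a_1,\dots,a_g\rangle$ encircling the $i$-th hole is sent to a conjugate of the commutator $[x_i,y_i]$, the boundary curve of the $i$-th punctured torus. Hence the image subgroup $H = \langle a_1, \dots, a_g\rangle \subset F_{2g}$ is free of rank $g$.

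Next I would exploit that $\varphi^+(\beta)$ is supported away from the interiors of the glued-on tori, up to permuting them. Fixing the boundary basepoint, the action on $\pi_1$ yields a homomorphism $\Gamma_{g,1} \to \Aut(F_{2g})$ (which I make no claim is injective), and I claim the composite $B_g \xrightarrow{\varphi^+} \Gamma_{g,1} \to \Aut(F_{2g})$ preserves $H$ and restricts on $H$ to Artin's representation $A: B_g \hookrightarrow \Aut(F_g)$ of [A], under the identification $H \cong F_g$ above. Indeed, a diffeomorphism realizing $\varphi^+(\beta)$ acts as $\beta$ on the disk-with-holes part and at most permutes the tori, so on each loop $a_i$ it produces exactly the word in the $a_j$ prescribed by the Artin action; this is the same computation that defines $A$ on $\Sigma_{0,g+1}$, unaffected by the passage to $\Sigma_{g,1}$.

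With these two points in place, injectivity is immediate: if $\varphi^+(\beta)$ is the identity mapping class, it acts trivially on $F_{2g}$, hence trivially on $H$; but the induced action on $H$ is $A(\beta)$, and $A$ is faithful by Artin's theorem, so $\beta = 1$. This is exactly the logic of Lemma 2.3, with the inclusion $a_i \mapsto 2c_i$ used there replaced by $a_i \mapsto [x_i,y_i]$ here.

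The only genuinely non-formal point, and hence the main obstacle, is verifying that the restriction of the $\pi_1$-action to $H$ is \emph{literally} Artin's representation rather than merely some faithful representation. This amounts to checking that the commutator curves $[x_i,y_i]$ transform under $\varphi^+(\beta)$ exactly as the $a_i$ transform under $A(\beta)$, including the conjugating words coming from the arcs joining the basepoint to each handle. Since those arcs and the supporting region of $\varphi^+(\beta)$ all lie in the disk-with-holes, the match is forced; but this is the step that must be written out with care.
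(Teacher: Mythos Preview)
Your proposal is correct and follows essentially the same route as the paper: identify the image of $\pi_1(\Sigma_{0,g+1})$ in $\pi_1(\Sigma_{g,1})=F_{2g}$ via $a_i\mapsto [x_i,y_i]$, observe this is injective (the paper notes the images lie in disjoint sub-alphabets), and then conclude by the faithfulness of Artin's action on this rank-$g$ subgroup, exactly as in the earlier lemmas. The paper is terser---it simply writes ``we argue as before''---whereas you spell out that the restriction to $H$ is literally Artin's representation; this extra care is warranted but does not constitute a different argument.
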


\begin{proof}
The fundamental group of $\Sigma_{0, g+1}$ 
is freely generated by the $g$ curves $c_1, \dots, c_g$ that start
at a point on the outside boundary and wind around one of the holes: 
$\pi_1 (\Sigma _{0, g+1}) = \, \lbracket c_1, \dots , c_g\rbracket$.
The fundamental group of $\Sigma _{g,1}$ is freely generated by $2g$ curves: 
$\pi_1 (\Sigma _{g,1})= \, \lbracket a_1, b_1, \dots, a_g, b_g \rbracket$ 
with $a_i$ and $b_i$
the standard generators in the fundamental group 
of the $i$-th copy of the torus. 
Then under the inclusion of the disk into the genus $g$ surface
the generator $c_i$ maps 
to $a_i b_i a_i^{-1} b_i^{-1}$. So the $c_i$ are mapped to
words on subsets of the alphabet that are disjoint. 
Hence their images generate
a free group on $g$ generators and the 
induced map on fundamental groups is therefore an injection.
We argue as before, that therefore the  
action of $B_g$ via $\varphi^+$ on $\pi_1(\Sigma _{g,1})$ remains faithful
and hence $\varphi^+$ must be an injection. 
\end{proof}

\bigskip
%%%%%%%%%%%%%%%%%%%%%%%%%%%%%%%%%%%%%%%%%%%%%%%%%%%%%%
\subsection{More constructions.}

Other inclusions 
of the braid group into mapping class groups
can be constructed from the above ones
by precomposing with an automorphism of the braid group or
composition with an automorphism of the   mapping class groups.
We note here that conjugation of a non-geometric (or geometric)
embedding by a mapping class yields again a non-geometric (or geometric)
embedding as
Dehn twists are conjugated to Dehn twists.
Thus these will indeed produce new examples 
of non-geometric embeddings (or
geometric ones).
We also note that conjugation by a fixed element 
induces the identity in homology.
The results of section 5 and section 6 are therefore also valid for these variations.
We will not mention these additional embeddings any further.

\bigskip
%%%%%%%%%%%%%%%%%%%%%%%%%%%%%%%%%%%%%%%%%%%%%%%%%%%%%%%%%%%%%%%%%%%%%%%%%%
\section{Proving non-geometricity}

All  of our  constructions in section 2, 
with the exception of $\varphi^+$ in (2.5), use an 
orientation reversing
diffeomorphism of the oriented surface associated to the target.
This is key for proving that these embeddings are not geometric.

%Lemma 3.1
\begin{lemma}
{\it Let $J$ be an orientation reversing involution of $\Sigma_{g,n}$ and
$x \in \Gamma _{g,n}$ commute with $J$. Then $x$ is not a power of 
a Dehn twist unless it is trivial.}
\end{lemma}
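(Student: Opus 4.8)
The plan is to argue by contradiction, exploiting the single structural fact that an orientation-reversing diffeomorphism conjugates a Dehn twist to the inverse of a Dehn twist. Suppose $x = D_a^{\,k}$ for some simple closed curve $a$ and some integer $k \neq 0$. If $a$ bounds a disk then $D_a$ is already trivial and hence so is $x$; thus I may assume that $a$ is essential, in which case $D_a$ has infinite order in $\Gamma_{g,n}$.

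First I would record the conjugation rule. For any diffeomorphism $f$ one has $f D_a f^{-1} = D_{f(a)}^{\,\epsilon}$, where $\epsilon = +1$ if $f$ preserves orientation and $\epsilon = -1$ if $f$ reverses it; the sign flips because the handedness of the twist is reversed by an orientation-reversing map (for instance on the torus, conjugating the twist matrix by a reflection returns the inverse matrix). Since $J$ is orientation reversing this gives $J D_a J^{-1} = D_{J(a)}^{\,-1}$, and therefore
$$
J x J^{-1} \;=\; J D_a^{\,k} J^{-1} \;=\; D_{J(a)}^{\,-k}.
$$
Feeding in the hypothesis $J x J^{-1} = x$ yields the equation
$$
D_a^{\,k} \;=\; D_{J(a)}^{\,-k}
$$
in $\Gamma_{g,n}$: two powers of Dehn twists agree, but with opposite exponents $k$ and $-k$.

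The decisive step is then to invoke the cancellation property for powers of Dehn twists: if $a$ and $b$ are essential simple closed curves and $D_a^{\,m} = D_b^{\,n}$ with $m, n \neq 0$, then $a$ and $b$ are isotopic and $m = n$. Applied to the displayed identity (with $b = J(a)$, $m = k$, $n = -k$) this forces $k = -k$, hence $k = 0$, contradicting $k \neq 0$; so $x$ is not a nontrivial power of a Dehn twist. I expect this cancellation statement to be the only real obstacle. I would either cite the standard reference, or make the argument self-contained as follows: since the two sides are equal mapping classes they share the same canonical reduction system, which for a power of a single essential twist is $\{a\}$ on the left and $\{J(a)\}$ on the right, forcing $a \simeq J(a)$; the equation then collapses to $D_a^{\,2k} = 1$, and the infinite order of $D_a$ gives $k = 0$. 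Equivalently one can compare the sets of isotopy classes of curves fixed by each side, namely $\{c : i(a,c)=0\}$, to recover $a \simeq J(a)$, after which the same infinite-order argument closes the case.
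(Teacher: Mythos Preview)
Your proof is correct and follows essentially the same route as the paper: conjugate by $J$ to obtain $D_a^{\,k} = D_{J(a)}^{\,-k}$, then conclude $a \simeq J(a)$ and $k = -k$. The paper simply asserts the cancellation step (``this identity can only hold if $c$ is isotopic to $J(c)$ and $k=-k$''), whereas you supply the justification via canonical reduction systems and infinite order of $D_a$; your version is therefore the same argument spelled out more carefully.
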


\begin{proof}
We borrow an argument from [S].
Assume $x$ is the $k$-th  
power of a Dehn twist $D_c$ around a  simple closed curve
$c$ in $\Sigma _{g,n}$. As $x$ 
commutes with $J$ and 
$J$ is orientation reversing, 
$$
x= D_c^k = J D_c^k J = D^{-k}_{J(c)}.
$$
But this identity can only hold if $c$ is isotopic to $J(c)$ and $k=-k$. 
Therefore $x = D_c^k$ is trivial. 
\end{proof}

%Theorem 3.2 
\begin{theorem} 
{\it The embeddings  $R \circ \gamma$,  $L \circ \varphi$,  $L' \circ \phi$ and
$R' \circ \phi$ are not geometric.}
\end{theorem}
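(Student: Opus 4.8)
The plan is to reduce all four cases to Lemma 3.1 by a single observation: in each construction the target mapping class group lives on a surface carrying a distinguished orientation reversing involution $J$, and the entire image of the braid group is built from diffeomorphisms that commute with $J$. Granting this, each standard generator of the braid group maps to an element $x$ with $JxJ = x$; this $x$ is non-trivial because the embedding is injective (established in Lemma 2.1, Lemma 2.2, the paragraph following (2.4), and Lemma 2.4, respectively), so Lemma 3.1 shows that $x$ is not a power of a Dehn twist, and in particular not a single Dehn twist. Hence no standard generator is sent to a Dehn twist, which is exactly the assertion that the embeddings are non-geometric. The work is therefore entirely in identifying $J$ and checking the commutation in each case.

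For the two mirror constructions $R \circ \gamma$ and $R' \circ \phi$ the involution $J$ is the reflection in the plane used to double the surface. By construction $R$ (respectively $R'$) extends a diffeomorphism $f_0$ of the disk-with-holes half to the doubled surface by its mirror image, so the resulting diffeomorphism $\tilde f$ satisfies $\tilde f \circ J = J \circ \tilde f$; that is, every element in the image of $R$ (respectively $R'$) commutes with $J$. Since the boundary circles lie in the plane of reflection, $J$ is compatible with the boundary conventions, and the commutation passes to the mapping class group. As a reflection reverses orientation, Lemma 3.1 applies directly.

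For the two orientation-cover constructions $L \circ \varphi$ and $L' \circ \phi$ the involution $J$ is the non-trivial deck transformation of the orientation double cover: $\Sigma_{g-1,2} \to N_{g,1}$ in the first case and $\Sigma_{2g} \to N_{2g+1}$ in the second, where for the latter $J = -\mathrm{Id}$ as in Figure 4. Because the base surface is non-orientable, this deck transformation is orientation reversing. The map $L$ is by definition the lift of a diffeomorphism through this cover, and any such lift is automatically equivariant with respect to the deck group; hence every element in the image of $L$ commutes with $J$, and the same holds for $L \circ \varphi$ and for $L' \circ \phi$. Thus Lemma 3.1 again applies.

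I expect the main obstacle to be the bookkeeping that the commutation $\tilde f \circ J = J \circ \tilde f$ survives at the level of mapping classes and respects the boundary data, rather than any substantial argument. Since $J$ is a single fixed involution, commuting at the level of diffeomorphisms gives commuting in $\Gamma_{g,n}$ once the boundary behaviour is consistent. In the double-cover case $J$ interchanges the two lifted boundary circles of $\Sigma_{g-1,2}$ while the lifted diffeomorphisms fix each of them pointwise, so $\tilde f \circ J = J \circ \tilde f$ still holds; in the mirror case $J$ preserves the boundary lying in the plane. Once these routine compatibilities are recorded, Lemma 3.1 yields non-geometricity of $R \circ \gamma$, $L \circ \varphi$, $L' \circ \phi$ and $R' \circ \phi$ simultaneously.
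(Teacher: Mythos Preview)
Your proposal is correct and follows essentially the same route as the paper: in each case one exhibits the orientation reversing involution $J$ built into the construction (the mirror reflection for $R$ and $R'$, the deck transformation of the orientation cover for $L$ and $L'$), observes that the image of a standard generator commutes with $J$ and is non-trivial by the earlier injectivity lemmas, and then invokes Lemma~3.1. The paper's own proof is more terse and treats only $R\circ\gamma$ explicitly, leaving the other three cases as ``similar''; your case-by-case identification of $J$ and the boundary bookkeeping simply spell out what the paper leaves implicit.
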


An embedding 
that sends the standard generators of the 
braid group to some powers of Dehn twists are 
also called {\it pseudo-geometric} [W2]. 
The proof of the theorem  will show that these maps 
are not even pseudo-geometric.

\begin{proof} 
Consider $R \circ \gamma$ and 
let $\sigma$ be the image under $\gamma$ of 
one of the standard generators of the 
braid group. 
The image  $R(\sigma)$  is by definition invariant under the reflection in
the plane (see Figure 3) which is orientation reversing.
Lemma 3.1 implies that $R(\sigma)$ cannot be a power of a Dehn twist.
The arguments for $L\circ \varphi$, $L'\circ \phi$ and $R'\circ \phi$
are similar. 
\end{proof}

\bigskip

\noindent
%%%%%%%%%%%%%%%%%%%%%%%%%%%%%%%%%%%%%%%%%%%%%%%%%%%%%%%%%%%%%%%%%%%%%%%%%
%\begin{remark}
{\bf Remark.} 
The geometric embedding $\phi: B_{2g+2} \to \Gamma_{g,2}$ 
can also be constructed by a \lq doubling'
procedure as the maps  above.
For this,  identify first the braid group $B_{2g+2}$ 
as the mapping class group
of a disk with $2g+2$ unordered marked points which in turn
we identify as the 
orbit space of a genus $g$ surface  
$\Sigma_{g,2}$ under the hyper-elliptic involution, see for example [SeT].
However, in this case the involution is orientation preserving and the 
construction leads to a geometric inclusion.
%\end{remark}
%%%%%%%%%%%%%%%%%%%%%%%%%%%%%%%%%%%%%%%%%%%%%%%%%%%%%%%%%%%%%%%%%%%%%%%%%%%%%

\bigskip

We now turn to the standard embedding $\gamma$ from (2.1) and
the operadic embedding $\varphi ^+$ constructed in (2.5).

%Theorem 3.4
\begin{theorem}
{\it The embeddings $\gamma$ and  $\varphi^+$ are neither  
geometric nor pseudo-geometric.}
\end{theorem}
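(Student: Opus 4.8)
The plan is to prove the stronger statement that neither $\gamma$ nor $\varphi^+$ is pseudo-geometric; since a Dehn twist is the exponent-$1$ case of a power of a Dehn twist, this automatically yields non-geometricity as well. The argument of Theorem 3.2 is unavailable here, because both $\gamma$ and $\varphi^+$ are built from orientation-\emph{preserving} identifications, so there is no orientation-reversing involution commuting with the image and Lemma 3.1 does not apply. Instead I would exploit the fact that a power $D_c^k$ of a Dehn twist is invisible to certain natural linear representations of the mapping class group, while the images of the braid generators are not. Concretely I would use the permutation representation for $\gamma$ and the homology (symplectic) representation for $\varphi^+$.

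For $\gamma$ I would use the homomorphism $p\colon \Gamma_{0,(g),1}\to S_g$ recording the induced permutation of the $g$ holes, i.e. the underlying-permutation map of the ribbon braid group $\bZ\wr B_g$. By construction $p\circ\gamma\colon B_g\to S_g$ is the standard surjection, so $p(\gamma(\sigma_i))$ is the transposition $(i,i+1)\neq e$. On the other hand any Dehn twist $D_c$ is supported in an annular neighbourhood of $c$ disjoint from the boundary, hence fixes every boundary component and satisfies $p(D_c)=e$; the same holds for every power $D_c^k$. Thus $\gamma(\sigma_i)$ can never equal a power of a Dehn twist, and $\gamma$ is not pseudo-geometric.

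For $\varphi^+$ the target $\Gamma_{g,1}$ has a single boundary curve, so no permutation is available and I would pass instead to the action on $H_1(\Sigma_{g,1})\cong\bZ^{2g}$, that is, the homomorphism $\Gamma_{g,1}\to\mathrm{Sp}_{2g}(\bZ)$. A Dehn twist acts by a symplectic transvection $x\mapsto x+\langle x,[c]\rangle[c]$, which is unipotent, so every power $D_c^k$ acts unipotently (if $[c]=0$ it acts as the identity). It therefore suffices to check that $\varphi^+(\sigma_i)$ acts non-unipotently. Using the description from the proof of Lemma 2.6, where the loop $c_i$ around the $i$-th hole maps to $[a_i,b_i]$, I would lift the Artin action ($c_i\mapsto c_ic_{i+1}c_i^{-1}$, $c_{i+1}\mapsto c_i$) to the automorphism of $\pi_1(\Sigma_{g,1})=\lbracket a_1,b_1,\dots,a_g,b_g\rbracket$ induced by exchanging the two glued tori: $a_i\mapsto c_i a_{i+1}c_i^{-1}$, $b_i\mapsto c_i b_{i+1}c_i^{-1}$, $a_{i+1}\mapsto a_i$, $b_{i+1}\mapsto b_i$, identity on the rest. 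Abelianising, the conjugations vanish and $[c_i]=0$, so on $H_1$ the map $(\varphi^+(\sigma_i))_*$ is exactly the involution exchanging the hyperbolic pairs $\{[a_i],[b_i]\}$ and $\{[a_{i+1}],[b_{i+1}]\}$. This is a non-trivial involution, hence has eigenvalue $-1$ and is not unipotent, so $\varphi^+(\sigma_i)$ is not a power of a Dehn twist.

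The routine parts are the two vanishing statements for Dehn twists — triviality under $p$ and unipotence under the symplectic representation — which are standard. The main obstacle is pinning down the homological action of $\varphi^+(\sigma_i)$ precisely enough to be certain that exchanging the two tori induces an honest involution on $H_1$, with no hidden monodromy contribution that could perturb the eigenvalues. This is exactly what the $\pi_1$-computation above secures, since every conjugating factor is a product of commutators and dies in homology; a useful consistency check is that $\varphi^+(\sigma_i)^2=\varphi^+(\sigma_i^2)$ then acts trivially on $H_1$, as it must for the image of a pure braid.
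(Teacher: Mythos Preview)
Your argument is correct, and it is genuinely different from the paper's. The paper works topologically: it observes that both $\gamma(\sigma)$ and $\varphi^+(\sigma)$ are supported in a pair of pants $\Sigma_{0,3}$, classifies the three isotopy classes of essential simple closed curves there, and checks by hand that $\gamma(\sigma)$ is not a power of a Dehn twist around any of them; for $\varphi^+$ it then argues that a hypothetical curve $a$ with $\varphi^+(\sigma)=D_a^k$ would have to be isotopable into this subsurface $\Sigma_{0,3}\subset\Sigma_{g,1}$, reducing to the previous case. Your approach is algebraic: you pass to representations of the target group (the permutation of the inner boundary components for $\gamma$, the symplectic action on $H_1(\Sigma_{g,1})$ for $\varphi^+$) in which every power of a Dehn twist has a very constrained image (trivial, respectively unipotent), and verify that the image of $\sigma_i$ violates that constraint. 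This is cleaner and sidesteps the isotopy claim the paper needs for $\varphi^+$; the paper's approach, on the other hand, keeps everything on the level of surfaces and would detect non-(pseudo-)geometricity even for elements lying in the Torelli group, which your symplectic test would not. Your consistency check that $[c_i]=[a_i,b_i]$ vanishes in homology, so that abelianising the explicit $\pi_1$-formulas collapses the action to the pure swap of the two hyperbolic pairs, is exactly what secures the $-1$ eigenvalue and hence non-unipotence.
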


\begin{proof}
Let $\sigma$ be one of the standard generators for the braid group $B_g$ 
and consider its image under $\gamma$. This is a mapping class supported on 
a disk $\Sigma_{0,3}$ 
with two holes; compare Figure 2. An application of the Jordan 
Curve Theorem shows that there are only three non-contractible
non-isotopic simple closed curves on  
$\Sigma_{0,3}$, each isotopic to one of the boundary circles. It is 
straight forward to check that $\gamma (\sigma)$ is not isotopic to (a power of)
a Dehn twist around any of these three curves. Hence, $\gamma $ is not 
geometric (or pseudo-geometric).

We now turn to $\varphi^+$. By definition (2.5), it is the composition of 
$\gamma$ and the map induced by the inclusion of surfaces
$\Sigma_{0,g+1} \subset \Sigma _{g, 1}$ achieved by sewing a torus with 
one boundary component to each of the interior boundaries of $\Sigma _{0,g+1}$.
Thus $\varphi (\sigma)$ is still defined as pictured in Figure 2 and
supported by the same $\Sigma_{0,3}$, now a 
subsurface of $\Sigma_{g, 1}$. 

The support of any Dehn twist $D_a$ around a simple closed curve 
$a$ is a neighbourhood
of $a$. Thus, if  
$\varphi(\sigma) = D_a^k$ for some $a$ and $k \in \bN$, we must  be able 
to isotope the curve $a$ into $\Sigma _{0,3}$.  But the argument above 
still applies and shows that $\varphi^+(\sigma)$ cannot be (a power of) 
a Dehn twist of any curve in $\Sigma_{0,3}$. Hence $\varphi^+$ is not 
geometric and not pseudo-geometric.
\end{proof}

\bigskip
%%%%%%%%%%%%%%%%%%%%%%%%%%%%%%%%%%%%%%%%%%%%%%%%%%%%%%%%%%%%%%%%%%%%%%%%%%%%%%%%
\section{ Action of the braid group operad}

%\vskip .2in
%Motivated by a conjecture of Harer, it was proved  in [SoT], [SeT] 
%that $\phi$ induces the trivial map on homology with field coefficients or
% for any constant coefficients in the stable range.
%We will now prove analogous results for the maps constructed above.

Consider the following group level version of the well-known $E_2$-operad. 
(See, e.g. [T1] for  details.)

\medskip

As in the introduction to section 2, identify the pure braid group on $k$ strands
with a subgroup  $\cD_k \subset \Gamma _{0,k+1}$ of the pure ribbon braid group, i.e.
the  mapping class group
of a disk with $k$ holes whose boundaries are parametrised. 
The collection $\cD =\{ \cD_k \}_{k\geq 0}$ forms an operad 
with structure maps 

$$
\theta: \cD_k \times (\cD_{m_1} \times \dots \times \cD_{m_k})
\longrightarrow \cD_{m_1 + \dots + m_k}
$$

\noindent
induced by
sewing  of the underlying surfaces. To be more precise, for each $i$,
the  boundary 
of the $i$-th hole in   $\Sigma _{0, k+1}$ 
is sewn to the (outer) boundary of
the $i$-th disk $\Sigma _{0, m_i+1}$.

The operad $\cD$ acts naturally on $\cB = \coprod _{m\geq 1} B_m$
where each braid group $B_m$ is identified via $\gamma$ as a subgroup 
of $\Gamma_{0,(m),1}$. The action is again induced by gluing of the
underlying surfaces. Indeed 

$$
\theta _{\cB}: \cD_k \times (B_{m_1} \times \dots \times B_{m_k}) 
\longrightarrow 
B_{m_1 + \dots + m_k}
$$ 

\noindent
agrees with the structure map $\theta$ on the pure braid subgroups. 

\medskip

This action of $\cD$ can further be  extended to an action on 

$$
\Gamma_R = \coprod_{m>1} \Gamma_{m-1,2}
$$

\noindent
via the mirror construction $R$ from (2.2). To define 

$$
\theta_R: \cD_k \times (\Gamma _{m_1-1, 2} \times \dots \times \Gamma _{m_k-1,2} ) 
\longrightarrow \Gamma _{m_1 + \dots + m_k -1, 2}
$$ 

\noindent
place each of the underlying surfaces across a plane, 
so that
one half is reflected by the plane  onto the other, as in Figure 3. 
Then sewing  $k$-legged trousers to 
the $k$ boundary components on the left halves and 
another one to the right
halves gives a surfaces of type $\Sigma _{m_1 + \dots + m_k -1, 2}$.
An element in $\cD_k$ defines a mapping class on the left $k$-legged
trousers and by mirroring a class on the right
$k$-legged trousers. The following result holds by construction.

%Lemma 4.1
\begin{lemma}
{\it The map $R \circ \gamma$ induces a map of $\cD$-algebras $\cB \to \Gamma_R$.}
\end{lemma}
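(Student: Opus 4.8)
The plan is to verify that the two ways of combining operad structure maps with the embedding $R \circ \gamma$ agree, which is the content of $R \circ \gamma$ being a map of $\cD$-algebras. Concretely, I must show that the square relating the $\cD$-action $\theta_{\cB}$ on $\cB$ and the $\cD$-action $\theta_R$ on $\Gamma_R$ commutes when the vertical maps are given by $R \circ \gamma$; that is, for each $k$ and each tuple $(m_1, \dots, m_k)$,
\[
\theta_R \circ (\mathrm{id}_{\cD_k} \times (R\gamma)^{\times k}) \;=\; (R\gamma) \circ \theta_{\cB}
\]
as maps $\cD_k \times (B_{m_1} \times \dots \times B_{m_k}) \to \Gamma_{m_1 + \dots + m_k - 1, 2}$.

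First I would unwind both sides geometrically, since both operad actions are defined purely by sewing of the underlying surfaces. On the $\cB$ side, $\theta_{\cB}$ sews each disk-with-holes $\Sigma_{0, m_i+1}$ (carrying the braid $B_{m_i}$ via $\gamma$) into the $i$-th hole of the $\cD_k$-surface $\Sigma_{0,k+1}$, producing a disk with $m_1 + \dots + m_k$ holes; applying $R\gamma$ afterwards then doubles this single glued-up disk across the plane. On the other side, $(R\gamma)^{\times k}$ first doubles each $\Sigma_{0,m_i+1}$ separately, and $\theta_R$ then sews $k$-legged trousers onto the left and right halves to assemble the genus-$(m_1+\dots+m_k-1)$ surface. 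The key observation is that reflection in the plane commutes with sewing: doubling-then-assembling and assembling-then-doubling produce the same surface, because the mirror plane is chosen to contain all the relevant boundary circles, so the trousers on the left half are exactly the double of the $\cD_k$-disk's contribution and the trousers on the right are their mirror images. Thus the underlying surfaces match, and under this identification the mapping classes match as well: on the left half both composites act by the $\gamma$-image of the glued braid, and on the right half both act by its reflection under $R$.

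The main step is therefore to make precise the identification of the two sewing patterns and check that the mapping classes correspond under it. I expect the principal obstacle to be bookkeeping rather than genuine difficulty: one must confirm that the $k$-legged trousers introduced by $\theta_R$, together with the reflected $\cD_k$-surface, reconstruct exactly the doubled $\cD_k$-disk that arises when $R$ is applied after $\theta_{\cB}$, and that the extra boundary-circle Dehn twists (the $\bZ$-factors in $\Gamma_{0,(m),1} \simeq \bZ \wr B_m$) are handled consistently on both sides. Since everything is defined by gluing and reflection, this is ultimately a diagram-chasing verification on surfaces; the statement that it "holds by construction" reflects that each operad action was set up so as to be compatible with $R \circ \gamma$ by design. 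I would close by remarking that associativity, unitality, and equivariance of this induced map follow automatically from the corresponding properties of $\theta_{\cB}$ and $\theta_R$, so no separate check is required.
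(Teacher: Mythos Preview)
Your proposal is correct and aligns with the paper's approach: the paper offers no proof beyond the sentence ``The following result holds by construction,'' and what you have written is precisely an unpacking of that phrase---you identify the commuting square that must be verified and explain why doubling-then-assembling agrees with assembling-then-doubling because reflection in the plane commutes with the sewing operations defining $\theta_{\cB}$ and $\theta_R$. Your level of detail exceeds what the paper provides, but the underlying idea is identical.
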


Similarly, $\cN: = \coprod _{m>1} \cN_{m,1}$ is a $\cD$-algebra. Again, the action

$$
\theta _{\cN} : \cD_k \times (\cN_{m_1, 1} \times \dots
\times \cN_{m_k, 1}) \longrightarrow \cN_{m_1 + \dots + m_k, 1}
$$

\noindent
is induced by sewing the legs of  $k$-legged trousers to the boundary
components of the $k$ non-orientable surfaces. And again by construction,
we obtain the following result.

%Lemma 4.2
\begin{lemma}
{\it The map $\varphi$ induces a map of $\cD$-algebras $\cB \to \cN$.}
\end{lemma}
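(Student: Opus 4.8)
The plan is to check directly that $\varphi = \{\varphi_m\}$ intertwines the two operad actions; concretely, I would verify that for every $k$ and every tuple $(m_1,\dots,m_k)$ the diagram
$$
\CD
\cD_k \times (B_{m_1} \times \dots \times B_{m_k}) @>\theta_\cB>> B_{m_1+\dots+m_k} \\
@V \mathrm{id}\times\varphi^{\times k} VV @VV\varphi V \\
\cD_k \times (\cN_{m_1,1} \times \dots \times \cN_{m_k,1}) @>\theta_\cN>> \cN_{m_1+\dots+m_k,1}
\endCD
$$
commutes. This is exactly the statement that $\varphi$ is a morphism of $\cD$-algebras, and since both actions are defined geometrically the verification amounts to comparing two ways of gluing the same surface together with the mapping classes supported on its pieces.

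First I would fix representatives: an element $d\in\cD_k$ by a diffeomorphism of the $k$-legged trouser $\Sigma_{0,k+1}$, each $b_i$ (via $\gamma$) by a diffeomorphism of the disk with $m_i$ holes $\Sigma_{0,(m_i),1}$ supported away from the outer boundary, and recall that $\varphi(b_i)$ is obtained by gluing a copy of $N_{1,1}$ to each of the $m_i$ inner holes and extending $b_i$ across them by the identity on fixed holes and by the permutation diffeomorphism of $N_{1,1} \sqcup \dots \sqcup N_{1,1}$ on the permuted ones. Going around the square one way, $\theta_\cB(d;b_1,\dots,b_k)$ sews the $i$-th leg of the trouser to the outer boundary of the $i$-th disk-with-holes, and then $\varphi$ attaches a M\"obius band to each of the resulting $M=m_1+\dots+m_k$ holes; going the other way, one first attaches the M\"obius bands to form the $N_{m_i,1}$ carrying $\varphi(b_i)$ and then sews the trouser legs to their boundaries. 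In both cases the underlying surface is the same $N_{M,1}$ assembled from one trouser, $k$ disks-with-holes and $M$ M\"obius bands, because the trouser is glued along the outer boundaries of the disks-with-holes, which are disjoint from the inner holes where the bands are attached; thus the two gluing orders agree by associativity of gluing along parametrised circles.

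It then remains to see that the two mapping classes coincide, and here the whole argument rests on locality of supports. The class $d$ is supported on the trouser and extends by the identity over everything sewn into its legs, the M\"obius bands included, independently of the order of assembly; each $b_i$ is supported on its disk-with-holes, and the extension defining $\varphi(b_i)$ alters only a collar of the inner holes, which meets neither the outer boundary (where the trouser is attached) nor any other block. Hence attaching the bands commutes with sewing in the trouser. The one point that needs genuine care---and the only place the naturality could fail---is the interaction of the permutation part of $\varphi$ with the operadic gluing: since the elements of $\cD_k$ are \emph{pure} ribbon braids they do not permute the legs, so each permutation of M\"obius bands stays confined to its own block $m_i$ and is preserved under gluing. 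With this observed, the two composites define the same mapping class of $N_{M,1}$, exactly as in the case of $R\circ\gamma$ above, and the square commutes.
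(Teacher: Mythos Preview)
Your proposal is correct and is essentially what the paper means when it says the lemma holds ``by construction'': the paper gives no separate proof beyond the observation that both $\theta_\cB$ and $\theta_\cN$ are defined by sewing the legs of the $k$-legged trousers to the outer boundaries, so $\varphi$ (which only modifies the inner holes) manifestly commutes with them. You have simply spelled out the underlying associativity-of-gluing and locality-of-support argument, including the one nontrivial check that the permutation part of $\varphi$ causes no trouble because $\cD_k$ consists of pure braids.
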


To see that the lift $L$ to the orientation cover is a map of $\cD$-algebras 
we need to consider a variant $\Gamma_L$ of  the $\cD$-algebra
$\Gamma_R$. The underlying groups are the same but the action 
$\theta_L$ is such that it commutes with $L$. To achieve this
the mapping class defined on  one $k$-legged trousers is paired with 
that on the other via the lift $L$ so that the following holds.

%Lemma 4.3
\begin{lemma}
{\it The map $L$ induces a map of $\cD$-algebras $\cN \to \Gamma_L$.}
\end{lemma}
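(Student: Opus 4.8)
The plan is to define the action $\theta_L$ by an explicit sewing construction, to verify that the square below commutes (this is precisely the assertion that $\theta_L$ ``commutes with $L$''), and finally to check that $\theta_L$ satisfies the operad axioms.
$$
\CD
\cD_k\times(\cN_{m_1,1}\times\dots\times\cN_{m_k,1}) @>\theta_{\cN}>> \cN_{m,1}\\
@V 1\times L^{\times k} VV @VV L V\\
\cD_k\times(\Gamma_{m_1-1,2}\times\dots\times\Gamma_{m_k-1,2}) @>\theta_L>> \Gamma_{m-1,2}
\endCD
$$
Throughout I write $p_m\colon \Sigma_{m-1,2}\to N_{m,1}$ for the orientation double cover, with deck transformation $\tau_m$ a fixed-point-free orientation-reversing involution satisfying $\Sigma_{m-1,2}/\tau_m = N_{m,1}$; recall that $L$ sends a mapping class to (a choice of) its lift, so that its image lies in the subgroup of $\tau_m$-equivariant classes. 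Here and below I abbreviate $m=m_1+\dots+m_k$.

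The key geometric input is that the orientation double cover is natural under the gluing defining $\theta_{\cN}$. When $N_{m,1}$ is assembled by sewing the $k$-legged trousers $T=\Sigma_{0,k+1}$ to the outer boundaries of $N_{m_1,1},\dots,N_{m_k,1}$, its cover is built as follows: over the orientable piece $T$ the cover is trivial, giving two copies $T^{+}\sqcup T^{-}$ that are interchanged by the global deck involution $\tau$; over each $N_{m_i,1}$ the cover is $\Sigma_{m_i-1,2}$ with involution $\tau_{m_i}$; and the sewing curves lift so that the $i$-th leg of $T^{+}$ is sewn to one boundary circle of $\Sigma_{m_i-1,2}$ and the $i$-th leg of $T^{-}$ to the other, while the outer boundaries of $T^{\pm}$ become the two boundary circles of $\Sigma_{m-1,2}$. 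This is exactly the mirror picture underlying $\theta_R$, with the deck involution $\tau$ in place of the plane reflection $\rho$ of Figure 3. Accordingly I would define $\theta_L(d;y_1,\dots,y_k)$ by this sewing: let $y_i$ act on the $\Sigma_{m_i-1,2}$-piece, let $d$ act on $T^{+}$, and let its partner $\tau d\tau^{-1}$ (the second lift of $d$) act on $T^{-}$. This makes sense for arbitrary $y_i\in\Gamma_{m_i-1,2}$, not only those in the image of $L$, so $\theta_L$ is defined on all of $\Gamma_L$.

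With this definition the square commutes essentially by inspection: since the covering projection of the glued surface is the gluing of the covering projections, the lift of $\theta_{\cN}(d;x_1,\dots,x_k)$ restricts to $L(x_i)$ on each $\Sigma_{m_i-1,2}$ and to $d$ on $T^{+}$ and $\tau d\tau^{-1}$ on $T^{-}$, which is exactly $\theta_L(d;L(x_1),\dots,L(x_k))$. It then remains to confirm that $\theta_L$ is a genuine $\cD$-action. Associativity, unitality and symmetric-group equivariance hold for the same reason as for $\theta_R$: the structure maps are induced by iterated sewing of surfaces, and $\theta_L$ differs from $\theta_R$ only in producing the second-trousers class via $\tau$ rather than via $\rho$. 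Since $\tau$ is itself natural under the gluing, these compatibilities are checked exactly as for $\theta_R$, which is already known to be an operad action.

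I expect the main obstacle to be the orientation bookkeeping underlying the naturality of the double cover under gluing: one must check that the global deck involution $\tau$ restricts on the orientable trousers piece to the interchange of the two trivially-covering sheets, and is orientation-reversing there, so that the lift of a diffeomorphism supported on $T$ acts as $d$ on $T^{+}$ and as the deck-conjugate $\tau d\tau^{-1}$ on $T^{-}$ --- this is precisely the recipe distinguishing $\theta_L$ from $\theta_R$. The only other point needing care is that $\theta_L$ be defined on all of $\Gamma_{m-1,2}$ and not merely on $\mathrm{im}\,L$; this is handled by giving $\theta_L$ the purely geometric sewing definition above, which never refers to $\tau_{m_i}$-equivariance of the inputs $y_i$.
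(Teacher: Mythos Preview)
Your proposal is correct and follows essentially the same approach as the paper. The paper does not give a separate proof: it simply defines $\theta_L$ in the paragraph preceding the lemma by declaring that ``the mapping class defined on one $k$-legged trousers is paired with that on the other via the lift $L$'', so that the statement holds by construction; you have unpacked exactly this definition (the partner class on the second sheet is the deck-conjugate $\tau d\tau^{-1}$, i.e.\ the other lift of $d$), verified the commuting square, and checked the operad axioms, which is more detail than the paper supplies but the same idea.
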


We recall from [SeT] that also the 
standard embedding $\phi$ from (1.1) 
induces a map of $\cD$-algebras $\cB^{ev} \to \Gamma_\phi$. 
Here $\cB^{ev} = \coprod_{m>1} B_{2m}$ is a $\cD$-subalgebra of $\cB$,
and  $\Gamma_\phi$  is the same collection of  groups as $\Gamma_R$ 
and $\Gamma_L$ but 
has a slightly different action. 
We think of $\phi$ as explained in the remark following Theorem 3.2 above
as  lifting mapping classes of the $2m$-punctured disk 
to the ramified double
cover $\Sigma _{m-1}$ associated to the hyper-elliptic involution. 
Thus, in this case
$\theta_{\phi}$ is defined so as to commute with 
the hyper-elliptic 
involution.

%Lemma 4.4
\begin{lemma}
{\it The map $\phi$ induces a map of $\cD$-algebras $\cB^{ev} \to \Gamma_\phi$.}
\end{lemma}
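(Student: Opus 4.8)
The plan is to verify, exactly as for the maps $R \circ \gamma$, $\varphi$ and $L$ in the preceding lemmas of this section, the defining property of a morphism of $\cD$-algebras. Concretely, writing $m = m_1 + \dots + m_k$, I would show that for every $k$ and all $m_1, \dots, m_k > 1$ the square
$$
\CD
\cD_k \times \prod_i B_{2m_i} @>\mathrm{id} \times \phi^{\times k}>> \cD_k \times \prod_i \Gamma_{m_i-1,2} \\
@V\theta_{\cB}VV @VV\theta_\phi V \\
B_{2m} @>\phi>> \Gamma_{m-1,2}
\endCD
$$
commutes. Let $\iota$ denote the hyper-elliptic involution, i.e. the deck transformation of the double cover of the $2m$-punctured disk branched over the marked points, with total space $\Sigma_{m-1,2}$, so that, as in the remark following Theorem 3.2, the class $\phi(\beta)$ is the unique lift of $\beta \in B_{2m}$ to $\Sigma_{m-1,2}$ that commutes with $\iota$ and covers the identity on the boundary. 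Since $\theta_\phi$ was \emph{defined} to be the lift of $\theta_{\cB}$ commuting with $\iota$, the square commutes provided the branched cover construction underlying $\phi$ is itself operadic: the double cover of a sewn disk must be canonically the sewing of the double covers of its pieces.

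This compatibility is what I would establish next. Performing $\theta_{\cB}(d; \beta_1, \dots, \beta_k)$ downstairs sews the $i$-th $2m_i$-punctured input disk into the $i$-th hole of the $k$-holed disk carrying $d \in \cD_k$; the resulting class agrees with $d$ on the $k$-holed (trousers) region and with $\beta_i$ on the $i$-th input disk. I would then describe the branched double cover of this glued disk piece by piece. Over the $i$-th input disk the branch locus is precisely its $2m_i$ marked points, so the restricted cover is the hyper-elliptic cover $\Sigma_{m_i-1,2}$, on which $\beta_i$ lifts to $\phi(\beta_i)$. The trousers region contains no branch points, so its cover is the $\bZ/2$-cover classified by the restricted monodromy. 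Gluing these covers along the circles lying over the sewing curves reproduces $\Sigma_{m-1,2}$ together with the lift of $\theta_{\cB}(d; \vec\beta)$, and this is exactly $\theta_\phi(d; \phi(\beta_1), \dots, \phi(\beta_k))$: the two sheets over the trousers region are the two $k$-legged trousers of the construction of $\theta_\phi$, interchanged by $\iota$, on which $d$ acts by $d$ and by $\iota d \iota$ respectively. As the $\iota$-equivariant lift is unique, the two composites around the square agree.

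The crux, and the only place where the even-strand hypothesis enters, is the monodromy bookkeeping along the sewing circles. The double cover is classified by the homomorphism from $\pi_1$ of the punctured disk to $\bZ/2$ sending each puncture loop to the nontrivial element $\tau$; a boundary circle enclosing $j$ marked points then carries monodromy $\tau^j$. Because each input contributes $2m_i$ punctures, every sewing circle encloses an \emph{even} number of branch points, its monodromy is trivial, and the cover is the trivial two-sheeted cover over a collar of that circle. This is exactly what permits the hyper-elliptic covers $\Sigma_{m_i-1,2}$ and the two trousers sheets to be glued $\iota$-equivariantly into the connected surface $\Sigma_{m-1,2}$ (whose two boundary components likewise arise because the outer boundary encloses $2m$ branch points). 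For an odd input the monodromy would be nontrivial, the cover connected over the sewing circle, and no such $\iota$-equivariant gluing would exist; this is precisely why the domain is the subalgebra $\cB^{ev}$ rather than all of $\cB$, and why $\theta_\phi$ (a branched gluing of two sheets) differs from the reflection gluing $\theta_R$.

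The remaining points are routine bookkeeping that I would not belabour: that the chosen lifts respect the boundary parametrisations defining $\Gamma_{m-1,2}$ and the sewing maps, that $\iota$ restricts on each input to the hyper-elliptic involution while interchanging the two trousers sheets over the branch-free region, and that the operad unit and associativity constraints carry over from those of $\theta_{\cB}$ via functoriality of the branched-cover construction. Since all of this is recorded in [SeT], I regard the argument above as a translation of their computation into the present operadic language rather than a genuinely new proof.
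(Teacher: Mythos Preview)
Your proposal is correct and follows exactly the approach the paper indicates: the paper does not give a proof but simply recalls the result from [SeT], noting that $\theta_\phi$ is \emph{defined} so as to commute with the hyper-elliptic involution, which is precisely the mechanism you unpack. Your detailed verification of the branched-cover gluing and the monodromy computation explaining the restriction to $\cB^{ev}$ is a faithful expansion of what the paper leaves implicit, and your closing remark that this is a translation of [SeT] into the present language is accurate.
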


And we also recall the best-known $\cD$-algebra structure on 

$$
\Gamma := \coprod_{m \geq 0} \Gamma_{m,1};
$$ 

\noindent
see [M] and also [B\"o]. 
In this case the action $\theta_\Gamma$
is defined just as for $\theta_{\cN}$ but with 
$\cN_{m_i, 1}$ replaced by $\Gamma_{m_i,1}$. 
As $\varphi^+$ is essentially part of the $\cD$-algebra structure the 
following is immediate.

%Lemma 4.5}
\begin{lemma}
{\it The map $\varphi^+$ induces a map of $\cD $-algebras $\cB \to \Gamma$.}
\end{lemma}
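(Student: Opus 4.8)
The plan is to verify directly that $\varphi^+$ intertwines the two $\cD$-actions, i.e.\ that for every $d \in \cD_k$ and every tuple $(\beta_1, \dots, \beta_k) \in B_{m_1} \times \dots \times B_{m_k}$ one has
$$
\varphi^+\bigl(\theta_\cB(d; \beta_1, \dots, \beta_k)\bigr) = \theta_\Gamma\bigl(d; \varphi^+(\beta_1), \dots, \varphi^+(\beta_k)\bigr).
$$
Since $\theta_\cB$, $\theta_\Gamma$ and $\varphi^+$ are all defined purely by sewing surfaces and extending diffeomorphisms across the new boundaries, this identity should follow from the associativity of sewing, with no essential computation required.

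First I would unwind the two sides as surface constructions. On the left, $\theta_\cB$ sews the outer boundary of each disk-with-holes $\Sigma_{0,m_i+1}$ (carrying $\gamma(\beta_i)$) into the $i$-th hole of $\Sigma_{0,k+1}$ (carrying the pure braid $d$), producing a disk with $\sum m_i$ holes; then $\varphi^+$ caps each of these holes with a torus $\Sigma_{1,1}$ carrying the identity. On the right, $\varphi^+$ first caps the $m_i$ holes of each $\Sigma_{0,m_i+1}$ with tori to form $\Sigma_{m_i,1}$, and $\theta_\Gamma$ then sews these into the holes of $\Sigma_{0,k+1}$. Both recipes assemble the identical surface $\Sigma_{\sum m_i, 1}$: a copy of $\Sigma_{0,k+1}$ in whose $i$-th hole sits $\Sigma_{0,m_i+1}$ with a torus sewn to each of its $m_i$ holes. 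Under the canonical identification of these two assemblies, the composite diffeomorphisms agree slot by slot: $d$ acts on the $\Sigma_{0,k+1}$ part, $\beta_i$ on the $\Sigma_{0,m_i+1}$ part, and the identity on every torus, in either order of construction.

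The one point that needs care, and which I expect to be the main (though modest) obstacle, is the behaviour on the glued-on tori when the braids $\beta_i$ permute holes. Since $\varphi^+$ extends a hole-permuting diffeomorphism by the corresponding permutation of the tori, I would check that this permutation datum is transported consistently. The permutation induced by $\theta_\cB(d; \beta_1, \dots, \beta_k)$ on the $\sum m_i$ holes is the disjoint union of the permutations induced by the individual $\beta_i$, the pure braid $d \in \cD_k$ contributing no permutation of the blocks; hence the torus-permutation produced on the left equals the one produced on the right. As the tori all carry the identity and are interchanged only by these matching permutations, the two mapping classes coincide.

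Having established the commuting square for the pair $(\theta_\cB, \theta_\Gamma)$, the statement that $\varphi^+$ is a map of $\cD$-algebras is then immediate, as the remark preceding the lemma anticipates: $\varphi^+$ is precisely the action $\theta_\Gamma$ evaluated with the fixed identity-torus class plugged into every input slot, so compatibility with the structure maps is a special case of the equivariance already built into the operad action on $\Gamma$.
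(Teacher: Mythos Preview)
Your proposal is correct and matches the paper's approach: the paper gives no explicit proof, merely noting just before the lemma that ``$\varphi^+$ is essentially part of the $\cD$-algebra structure'' and declaring the result immediate. Your final paragraph captures precisely this observation --- that $\varphi^+$ is $\theta_\Gamma$ with the identity torus inserted in each slot, so compatibility with the structure maps is built into the operad axioms --- and the preceding paragraphs simply spell out what the paper leaves implicit.
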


\bigskip
%%%%%%%%%%%%%%%%%%%%%%%%%%%%%%%%%%%%%%%%%
\section{The maps in stable homology}

We will  determine the induced maps in stable homology of all 
the embeddings of braid groups into mapping class groups of 
orientable or non-orientable surfaces constructed earlier. 

\medskip

The Harer-Ivanov homology stability theorem states that the
embedding $\Sigma_{g,1} \to \Sigma _{g+1, 1}$ induces an isomorphism 

$$
H_* (\Gamma _{g,1}) \longrightarrow H_* (\Gamma _{g+1,1})
$$

\noindent
in a range of degrees, called the {\it stable range}. 
This range has recently been improved to $* \leq 2(g-1)/3$
by Boldsen [Bo] and Randal-Williams [RW].

%Theorem 5.1
\begin{theorem}
{\it In the stable range, the maps 
$R \circ \gamma$, $L \circ \varphi$, $\phi$, $\varphi^+$, $L' \circ \phi$, 
and $R' \circ \phi$ induce the zero map in any reduced, generalised homology theory.} 
\end{theorem}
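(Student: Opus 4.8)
The plan is to reduce every one of the six embeddings to a single statement about a double-loop map out of $\Omega^2 S^2$, and then to exploit that all six targets are mapping class groups of \emph{orientable} surfaces. Concretely, I would first record that each map is, after group completion, a map of $\cD$-algebras: $R\circ\gamma$ by Lemma~4.1, $L\circ\varphi$ by Lemmas~4.2 and~4.3, $\phi$ by Lemma~4.4, $\varphi^+$ by Lemma~4.5, and $L'\circ\phi$, $R'\circ\phi$ by the analogous constructions for the composite $L'\circ\phi$ of (2.4) and for $R'\circ\phi$ of Section~2.4. Since $\cD$ is an $E_2$-operad, the group-completion theorem turns each into a double-loop map. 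The common source $\cB=\coprod_m B_m$ is the free $E_2$-space on a point, so by Segal's theorem it group-completes to $\Omega^2 S^2$, with basepoint component $BB_\infty^+\simeq\Omega^2_0 S^2$. Each target is a disjoint union of mapping class groups of orientable surfaces; by the Harer--Ivanov stability theorem quoted above its homology in the stable range is that of $\Gamma_\infty$, and by the theorem of Madsen and Weiss $B\Gamma_\infty^+\simeq\Omega^\infty_0\,\mathrm{MTSO}(2)$ is an \emph{infinite} loop space. Thus for each embedding the theorem reduces to showing that the induced double-loop map of basepoint components
\[
\bar f\colon\ \Omega^2_0 S^2\ \longrightarrow\ \Omega^\infty_0\,\mathrm{MTSO}(2)
\]
induces the zero map on reduced $E$-homology in positive degrees, for every reduced generalized homology theory $E$.

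For ordinary homology I expect the cleanest argument, via the Browder bracket. By F.~R. Cohen's computation [C], $\widetilde H_*(\Omega^2_0 S^2)$ is generated, as a ring closed under the Dyer--Lashof operations, by the single class $\lambda=[\iota,\iota]$ in degree $1$, the Browder self-bracket of the fundamental class $\iota$ of the generating component. A double-loop map is natural for both the Browder bracket and the Dyer--Lashof operations, so $\bar f_*(\lambda)=[\bar f_*\iota,\bar f_*\iota]$; but the Browder bracket vanishes identically in an infinite loop space, whence $\bar f_*(\lambda)=0$. Every positive-degree generator of $\widetilde H_*(\Omega^2_0 S^2)$ is obtained from $\lambda$ by iterated Dyer--Lashof operations and products, and $\bar f_*$ commutes with the former and respects the latter, so $\bar f_*$ vanishes on $\widetilde H_*$ in all positive degrees with any field coefficients; a universal-coefficient comparison gives the same integrally.

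The main obstacle is passing from ordinary homology to an \emph{arbitrary} reduced generalized theory, because the Browder/Dyer--Lashof calculus is intrinsic to ordinary $\bmod\,p$ homology and vanishing there does not formally imply vanishing for a general $E$ (the stable Hopf map is zero on ordinary homology yet nonzero on stable homotopy). My strategy would be to strengthen the conclusion to the assertion that $\bar f$ is \emph{stably} trivial in positive degrees, i.e.\ that the reduced suspension-spectrum map $\Sigma^\infty\bar f$ is null above degree zero; this at once yields the vanishing of $\bar f_*$ on every $\widetilde E_*$. To carry this out I would write $\bar f=\Omega^2\bar g$ for the induced map of double deloopings $\bar g\colon S^2\langle 2\rangle\to\Omega^\infty\Sigma^2\,\mathrm{MTSO}(2)$, available precisely because the target is an infinite loop space, and analyse $\bar g$ against the group-completed Snaith splitting of $\Sigma^\infty\Omega^2_0 S^2$, using naturality of the $E_2$-structure to argue that the image of the generator contributes nothing in positive stable degrees even though it is nonzero on $\pi_0$. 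Reconciling the nontrivial low-dimensional unstable homotopy of $\Omega^2_0 S^2$ (e.g.\ the Hopf class in $\pi_1=\pi_3(S^2)$) with this stable triviality is the delicate step and is where I expect the real work to lie; the conjugation variants of Section~2.6 then require nothing further, since conjugation induces the identity in homology.
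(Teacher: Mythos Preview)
Your reduction to a double-loop map out of $\Omega^2_0 S^2$ is correct and matches the paper, and your Browder-bracket argument does handle ordinary homology. But the passage to an arbitrary reduced generalised theory is not a technicality you should postpone: as you yourself note, vanishing on $H\bZ$ does not imply stable triviality, and the Snaith-splitting strategy you sketch is not a proof. This is a genuine gap.

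The paper closes it with a much simpler observation that makes the entire Browder/Dyer--Lashof and Madsen--Weiss apparatus unnecessary. The point is that $\Omega^2_0 S^2 \simeq \Omega^2 S^3$ is the \emph{free} double loop space on the circle $S^1$. Hence a double-loop map $\bar f\colon \Omega^2_0 S^2 \to B\Gamma_\infty^+$ is determined, up to homotopy through double-loop maps, by its restriction to $S^1$. But $B\Gamma_\infty^+$ is simply connected (since $H_1(\Gamma_{g,1})=0$ for $g\geq 3$ and the plus-construction kills the perfect commutator), so this restriction is null-homotopic, and therefore $\bar f$ itself is null-homotopic. Null-homotopy immediately gives vanishing in \emph{every} reduced generalised homology theory; one then descends to finite genus via Harer--Ivanov stability and the Atiyah--Hirzebruch spectral sequence. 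No infinite loop structure on the target, no Browder bracket, and no stable splitting are required.

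Two smaller remarks. First, for $L'\circ\phi$ and $R'\circ\phi$ you do not need to build separate $\cD$-algebra structures: once the result is established for $\phi$, these two follow formally because they factor through $\phi$. Second, your invocation of Madsen--Weiss is harmless but superfluous; the only fact about the target that is used is $\pi_1(B\Gamma_\infty^+)=0$.
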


\begin{proof} 
We sketch the argument here and refer for more details to [SoT], [SeT].
In section 4 we showed that the maps $R \circ \gamma$, $L \circ \varphi$, $\phi$ and $\varphi^+$ 
are maps of
$\cD$-algebras. After taking classifying spaces and 
group completion they induce therefore maps of double loop spaces  
\footnote{ The group completion of $\coprod _{m \geq 1} B(B_{2m}) $ consists 
of all the even components in $\Omega^2 S^2$, and the argument goes through.} 

$$
\Omega B(\coprod _{g>0} B( B_g)) 
\simeq \bZ \times B(B_\infty)^+ \simeq \Omega ^2 S^2
\longrightarrow
\Omega B(\coprod_{g\geq 0} B\Gamma _{g,1}) 
\simeq \bZ \times B\Gamma _\infty ^+.
$$

\noindent
Here $B_\infty = \lim _{g \to \infty} B_g$ and $\Gamma _\infty = 
\lim_{g \to \infty} \Gamma _{g,1}$
are the infinite braid and mapping class groups and
$X^+$ denotes the Quillenization of the space $X$. 
As $B(B_\infty)^+ \simeq \Omega ^2_0 S^2 \simeq \Omega ^2 S^3$ is the 
free object on the circle in the category of double loop spaces, 
on a connected component
these maps are determined by their restriction
to the circle. But these restrictions have to be homotopic to the constant map
as 
$B(\Gamma _\infty)^+$ is simply connected. Hence maps 
$B(B_\infty)^+ \to B(\Gamma _\infty)^+$ that are maps of double loop spaces
are null-homotopic. In particular, they induce the zero
map in any reduced, generalised  homology theory. 
Finally, the mapping class groups satisfy (ordinary) homology stability. 
By an application of the
Atiyah-Hirzebruch spectral sequence, the statement of the theorem
follows for the first four maps, including  $\phi$, and 
hence for $L' \circ \phi$ and $R' \circ \phi$.
\end{proof}

We now turn our attention to the mapping class group of non-orientable
surfaces and the embedding  $\varphi: B_g \to \cN_{g,1}$ defined in (2.3).
The commutator subgroup of $\cN_{g}$ is generated by
Dehn twists around two-sided curves. For $g \geq 7$ it
has index two and  
thus $H_1 (\cN_{g}) = \bF_2$ and is in particular not trivial; see [K]. 

\medskip

The mapping class groups $\cN_g$ also satisfy homology stability:

$$
H_* (\cN_g) = H_* (\cN_{g,1}) = H_* (\cN_{g+1, 1}).
$$

\noindent
Here  $* \leq  (g-3)/3$ for the first equality and $* \leq g/3$ for the second,
see [Wa2] and [RW]. 

%Thus,
% in particular we have that
%$$
%H_1(\Cal N_{g,1} )= \Bbb F_2 \quad \text{ for } g\geq 7,
%$$ 
%and
%the arguments used to prove Theorem 5.1 remain valid for 
%$$\varphi : B_g \to \Cal N_{g,1}$
%except  in characteristic $2$.

\smallskip

If $\sigma$ is  a standard generator of the braid 
group, its image under $\varphi$ interchanges  
two cross caps in $N_{g,1}$. Therefore,
it is not in the index two subgroup of $\cN_{g,1}$ generated by Dehn twists
around two-sided curves.
Indeed, the 
product of $\varphi(\sigma )$ with  the Dehn twist around the two-sided
curve that goes once through each cross cap is a cross-cap slide, see [S].
Hence, the map induced by $\varphi$ on
the first homology groups is surjective and not trivial. More generally
we have the following result.

%Theorem 5.2 
\begin{theorem}
{\it Let $g \geq 7$ and $0 < * \leq g/3$. 
When $\bF = \bQ$ or $\bF = \bF_p$ for an odd prime $p$,
the map
$$
\varphi_* : H_*(B_g ; \bF) \longrightarrow  H_*(\cN_{g,1} ; \bF)
$$
is zero, while   for $\bF = \bF_2$ it is an injection.}
\end{theorem}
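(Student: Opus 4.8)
The plan is to exploit that, by Lemma 4.2, $\varphi$ is a map of $\cD$-algebras, so that after passing to classifying spaces and group completing it induces a map of double loop spaces. On base components this is a map
$$f\colon \Omega^2_0 S^2 \simeq \Omega^2 S^3 \longrightarrow \Omega B\bigl(\textstyle\coprod_{m>1} B\cN_{m,1}\bigr),$$
and by the homological stability of the $\cN_g$ (valid for $*\le g/3$, [Wa2],[RW]) together with the classical stability for braid groups, in the stated range $\varphi_*$ is identified with $f_*$ on $H_*(B_\infty;\bF)\to H_*(\cN_\infty;\bF)$, where $\cN_\infty=\lim_g \cN_{g,1}$. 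By the non-orientable analogue of the Madsen--Weiss theorem the target is, after group completion, the basepoint component $\Omega^\infty_0\mathbf{MTO}(2)$ of the infinite loop space of the Thom spectrum $\mathbf{MTO}(2)=BO(2)^{-L}$ of minus the universal $2$-plane bundle $L$ [Wa2]; I will only use its $E_2$-structure and the Dyer--Lashof operations that come with it.

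The key structural input is that $H_*(\Omega^2 S^3;\bF)$ is the free $E_2$-algebra (free allowable algebra over the Araki--Kudo/Dyer--Lashof operations and the Browder bracket) on the single class $\iota\in H_1$, which is the image of the abelianisation generator of $B_\infty$. Since $f$ is a map of double loop spaces, $f_*$ is the unique such $E_2$-algebra map and is therefore completely determined by $f_*(\iota)\in H_1(\cN_\infty;\bF)$: every positive-degree class is obtained from $\iota$ by iterated products, operations and brackets, all of which $f_*$ preserves. By the computation preceding the theorem, $\varphi$ sends a standard generator to a crosscap transposition, so $f_*(\iota)$ is the reduction of the nontrivial class in $H_1(\cN_\infty;\bZ)=\bZ/2$ [K].

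For $\bF=\bQ$ or $\bF=\bF_p$ with $p$ odd this finishes the argument at once: since $H_1(\cN_\infty;\bZ)=\bZ/2$ is $2$-torsion, $H_1(\cN_\infty;\bF)=0$, so $f_*(\iota)=0$. As every positive-degree element of the free $E_2$-algebra lies in the ideal generated by $\iota$ under operations, products and brackets---each of which is additive and kills $0$---the map $f_*$ vanishes in all positive degrees, which is the asserted zero statement.

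For $\bF=\bF_2$ the class $f_*(\iota)$ is nonzero, and now $H_*(\Omega^2 S^3;\bF_2)=\bF_2[x_1,x_2,\dots]$ is polynomial on the iterated top operations $x_i=\xi_1^{\,i-1}\iota$, with $|x_i|=2^i-1$. Injectivity of $f_*$ is thus equivalent to the algebraic independence of the classes $f_*(x_i)=\xi_1^{\,i-1}f_*(\iota)$ in $H_*(\cN_\infty;\bF_2)$. The plan is to read these off from the identification $H_*(\cN_\infty;\bF_2)\cong H_*(\Omega^\infty_0\mathbf{MTO}(2);\bF_2)$ of [Wa2]: the latter is a free algebra over the Dyer--Lashof operations on $\widetilde H_*(\mathbf{MTO}(2);\bF_2)$, which by the mod $2$ Thom isomorphism is $H_{*+2}(BO(2);\bF_2)$, the dual of $\bF_2[w_1,w_2]$. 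One identifies $f_*(\iota)$ with the degree-$1$ class corresponding to $w_1$ and checks that applying $\xi_1$ repeatedly produces honest polynomial generators of the free Dyer--Lashof algebra, so that the $f_*(x_i)$ are algebraically independent. I expect this last point to be the main obstacle: it requires tracking the $E_2$ top operation $\xi_1$ through the Thom isomorphism for the non-orientable virtual bundle $-L$ and verifying that no unexpected decomposables or relations intervene, which is exactly the place where the non-orientable mod $2$ behaviour departs from the orientable case of Theorem 5.1.
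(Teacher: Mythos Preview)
Your argument for $\bF=\bQ$ and $\bF=\bF_p$ with $p$ odd is correct and essentially parallel to the paper's: both use that $\varphi$ is a $\cD$-algebra map, pass to double loop spaces, and exploit that $H_*(\Omega^2 S^3;\bF)$ is freely generated over the $E_2$-operations by $\iota$. You argue directly that $f_*(\iota)=0$ because $H_1(\cN_\infty;\bF)=0$; the paper instead factors through $\Omega^\infty S^\infty$ and quotes the known vanishing of $\Omega^2 S^2\to\Omega^\infty S^\infty$ in these characteristics. Both are fine.

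The $\bF_2$ case, however, has a genuine gap. You yourself flag it: you need the classes $\xi_1^{\,i-1}f_*(\iota)$ to be algebraically independent in $H_*(\cN_\infty;\bF_2)$, and your plan to extract this from the Thom isomorphism for $\mathbf{MTO}(2)$ is not carried out. Even granting the non-orientable Madsen--Weiss identification (which is not in [Wa2]; that is the stability paper), you would still have to identify $f_*(\iota)$ among the degree-one classes of $H_*(\Omega^\infty_0\mathbf{MTO}(2);\bF_2)$ and then check that iterated top operations on it never become decomposable. The description of $H_*(\Omega^\infty_0 X;\bF_2)$ is not simply ``free over Dyer--Lashof on $\widetilde H_*(X)$'': there are Adem relations and excess conditions, and decomposability questions for iterated $Q$'s on non-spherical classes are delicate. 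So as written, the injectivity claim is unproved.

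The paper avoids this computation entirely by a different mechanism. Using [T1], the $\cD$-algebra (double loop) structure on $\bZ\times B\cN_\infty^+$ extends to an \emph{infinite} loop space structure; hence the $E_2$-map $\Omega^2 S^2\to \bZ\times B\cN_\infty^+$ factors through the free infinite loop space $\Omega^\infty S^\infty\simeq \bZ\times B\Sigma_\infty^+$ via the canonical map $\Omega^2 S^2\to\Omega^\infty S^\infty$. That first map is classically injective on $\bF_2$-homology. For the second map, the paper invokes Wahl's comparison [Wa1] (transported verbatim to the non-orientable setting) to identify this infinite loop structure with the cobordism-category one, and then quotes [T2], which shows that $\Omega^\infty S^\infty\to \bZ\times B\cN_\infty^+$ is split up to homotopy, hence injective on homology. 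Composing gives the $\bF_2$-injectivity without any explicit Dyer--Lashof bookkeeping in $\mathbf{MTO}(2)$. The missing idea in your approach is precisely this $E_\infty$-extension and the resulting factorisation through $\Omega^\infty S^\infty$, together with the splitting of [T2].
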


\begin{proof}
%It only remains to show the second statement.  
The basic idea of the proof is similar to that used in Theorem 5.1 but 
we need to  use also some  quite technical results from [Wa], [T1] and [T2].
We sketch the argument.

It is well-known that the map from the braid to the symmetric group
induces after taking 
classifying spaces, stabilisation and Quillenization the canonical map

$$
\bZ \times BB_\infty^+ \simeq
\Omega^2 S^2 \longrightarrow \Omega^\infty S^\infty \simeq 
\bZ \times B\Sigma_\infty^+
$$ 

\noindent
from the free object generated by $S^0$ in the category of double loop spaces 
to the corresponding one in the category of infinite loop spaces.   
In homology with $\bF_2$ coefficients it 
induces an inclusion and it is zero in reduced homology with 
field coefficients of  characteristic
other than 2.

By the main theorem of [T1], the double loop space 
structure on $\bZ \times B\cN_\infty^+$ 
defined by the $\cD$-algebra structure on $\cN$
extends to an infinite loop space structure. 
This implies that the map $\Omega ^2 S^2 \to \bZ \times B \cN_\infty^+$
induced by $\varphi$ 
factors through $\Omega^\infty S^\infty$ via 
the above map.

Using cobordism categories of non-orientable surfaces one can show
that there is another infinite loop space structure on 
$\bZ \times B \cN^+_\infty$.  
By a theorem of Wahl these two infinite loop space 
structures are the same up to homotopy. 
To be more precise, Wahl shows in [Wa1] that the two constructions lead 
to the 
same 
infinite loop space structures up to homotopy in the orientable case, 
i.e., for 
$\bZ \times B \Gamma^+_\infty$. Her argument goes through verbatim to prove 
the same result 
for non-orientable surfaces.  
Thus, the map of infinite loop spaces
$\Omega^\infty S^\infty \to \bZ \times B \cN^+_\infty$
here is up to homotopy the same as the one used in [T2]. 
In [T2] we showed however that this map 
has a splitting up to homotopy and,   
in particular, induces an injection in homology. 

Combining all this we have proved that  the
composition 

$$
\bZ \times BB_\infty ^+ \simeq
\Omega^2 S^2 \longrightarrow \Omega ^\infty S^\infty \longrightarrow 
\bZ \times B \cN ^+_\infty
$$

\noindent
induces an injection on $\bF_2$-homology and is trivial in reduced homology
for $\bF= \bQ$
or $ \bF_p$, $p$ odd. As $ B_g \to B_\infty$ induces an injection
in homology with any field coefficients, see [CML], and 
by homology stability of the non-orientable mapping class group 
the theorem follows.
\end{proof}

\bigskip
%%%%%%%%%%%%%%%%%%%%%%%%%%%%%%%%%%%%%%%%%%%%%%%%%%%%%%%%%%%%%%%%%%%%%%%%%%%%%%%%%%%%%%%%
\section{Calculations in  unstable homology}

In this section we examine homomorphism induced  
in unstable homology with field coefficients by our embeddings. 
We restrict our discussion to the orientable case 
though a  similar analysis goes through also in the non-orientable case. 

\bigskip
%\vskip .1in

Consider any map

\begin{equation}
\alpha_* : H_* (B_m, \bF) \longrightarrow H_*(\Gamma _{g,b}, \bF)
%\tag 6.1
\end{equation}

\noindent
induced by a homomorphism $\alpha : B_m \to \Gamma_{g,b}$
that is part of a $\cD$-algebra map; here $b = 1, 2$ and $\bF$ is any field.
The main fact we will be using is that the homology of the braid group is 
generated by classes of degree one when taking the $\cD$-algebra structure 
into account.

\bigskip
\noindent
%%%%%%%%%%%%%%%%%%%%%%%%%%%%%%%%%%%%%%%%%%%%%%%%%%%%%%%%%%%%%%%%
{\bf 6.1. The rational case:}
Recall from [CLM] that
for $m > 1$ the $\bQ$-homology of $B_m$ is of rank one in degrees $0$ and $1$, 
and zero otherwise. 
Thus rationally, the braid groups have the homology of a circle.
We recall

\begin{equation}
H_1(\Gamma_{2,2}) = H_1(\Gamma_{2,1}) = \bZ / 10 \bZ, 
\quad \text{ and } \quad \; H_1(\Gamma_{g,1}) = 0 \, \text{ when } g \geq 3. 
%\tag 6.2
\end{equation}

\noindent
%These identities are readily computed form a convenient presentation of the
%mapping class group. Note that the geometric embedding $\phi$ identifies
%$B_6 \cong \Gamma _{2,2}$.
%This determines the map $\alpha$ in rational homology.
%
The first identity follows from the stability results [Bo] and [RW]; 
and the two computations are well-known. 
Thus $\alpha_*$ is trivial in rational homology.

\bigskip
%%%%%%%%%%%%%%%%%%%%%%%%%%%%%%%%%%%%%%%%%%%%%%%%%%%%%%%%%%%%%%%

We now turn to fields of finite characteristic. 
Recall from [CLM] that for $m > 1$ 
the $\bF_p$-homology is generated by $H_1 (B_m; \bF_p) = \bF_p$ and
the homology operations induced from the action of $\cD$. 
These  operations are the product,
the first Dyer-Lashof operation $Q$, and in the case of odd primes, 
the combination with the Bockstein 
operator $\beta Q$. More precisely,
for $x \in H_*(B_m; \bF_p)$, the operation $Q$ is defined by the formula

\begin{eqnarray}
Q(x) &=& \theta_* (e_1 \otimes x \otimes x)    \quad \text{ for } \quad p=2 \\
Q(x) &=& \theta_* (e_{p-1} \otimes x^p)     \,\:\quad \text{ for } \quad p>2 
%\tag 6.3
\end{eqnarray}

\noindent
where $e_1$ is of degree $1$, $e_{p-1}$ of degree $p-1$, 
and $\theta_*$ is induced by  the action $\theta = \theta_{\cB}$ of
$\cD$ on $\cB$ as defined in section 4.

\bigskip

\noindent
%%%%%%%%%%%%%%%%%%%%%%%%%%%%%%%%%%%%%%%%%%%%%%%%%%%%%%%%%%%
{\bf 6.2. The case $p$ even:} 
%We now consider first the case when the field is $\bF_2$.
Quoting [CLM, p. 347], 
the $\bF_2$-homology of the braid group can be described as

$$
H_*(B_m; \bF_2) = \bF_2 [ x_i] / I
$$

\noindent
where  $x_i= Q(x_{i-1})$ is of degree $2^i-1$ and $I$ is the ideal generated by
all monomials $x_{i_1}^{k_1} \dots x_{i_t}^{k_t}$ 
such that $\sum_{j=1} ^t \, k_j \, 2^{i_j} > m$.  In particular, 

$$
x_i = 0 \quad \text { if } \quad 2^i > m.
$$

\bigskip

\noindent
%%%%%%%%%%%%%%%%%%%%%%%%%%%%%%%%%%%%%%%%%%%%%%%%%%%%%%%%%%%%%%%%%%%%
{\bf 6.3. The case $p$ odd:}
Similarly, by [CLM, p. 347], for $p$ odd 
the $\bF_p$-homology of the braid group is a polynomial algebra
on generators $\lambda$, $y_i$, and $ \beta y_i$ 
of degrees $1$, $2 p^i-1$, and
$2p^i - 2$, modulo some  ideal $J$ which includes 
$y_i$ whenever $2p^i >m$. 
Furthermore, $y_1 = Q(\lambda)$ and $y_{i+1} = Q(y_i)$. 

\bigskip

%\vskip .2in
We consider now the image of the generators $x_i$ and $y_i$ under 
the map $\alpha_*$.
As an  immediate consequence of Theorem 5.1, and  
using the best homology stability range available, we have

\begin{align}
\alpha_*(x_i) &= 0 \;\;\; \text{ for } \;\; (3 \cdot 2^i - 1)/2 \leq g   \;\;\;  \text{ and } \;\; p = 2 \\
\alpha_*(y_i) &= 0 \;\;\; \text{ for } \;\; (6 \, p^i - 1)/2 \leq g      \;\;\;  \text{ and } \;\; p > 2.
\end{align}

\bigskip
%\vskip .1in

We now explain in two examples how the Dyer-Lashof algebra structure can be used 
to deduce similar results independent of Theorem 5.1, which in some cases
lead to stronger vanishing results.

%We note here that these results are somewhat stronger than the ones
%implied by Theorem 5.1, as $\alpha (x_i)$ and $\alpha (y_i)$ are in the 
%(known) stable range -and hence zero- only 
%when $3\cdot 2^{i-1} +1/2 \leq g$ and $3 \cdot p^i +1/2 \leq g$ respectively.
%$\Cal D$-algebras commutes with the operation $Q$ in homology.

\bigskip

%\vskip .2in
\noindent
%%%%%%%%%%%%%%%%%%%%%%%%%%%%%%%%%%%%%%%%%%%%%%%%%%%%%%%%%%%%%%%%%%%%%%%%%%%%
{\bf 6.4. Example:}
Consider the operadic embedding (2.5) when $m=g$, $b=1$ and
$\alpha$ is

$$  
\varphi^+ : B_g \longrightarrow \Gamma _{g,1}.
$$ 

\noindent
The $\cD$-algebra structure in this case is given by gluing the boundary 
of the surfaces $\Sigma_{g_i, 1}$ to the boundaries of the holes in the disk
$\Sigma_{0, k+1}$. The genus of the resulting surface is simply the 
sum of the genera $g_i$. 

\bigskip

Observe that to use the $\cD$-algebra structure, 
the genus of 
the surface
corresponding to the target group has to be large enough to be able 
to decompose it.
The equation $\varphi^+_* (Q(z)) = Q (\varphi^+_* (z))$ 
gives rise to the following  inductive formula. 
Assume 
$\varphi^+_* (x_{i+1}) = 0$  or $\varphi^+_* (y_i) = 0$ when the genus of the target 
surface is at least $d_i$. Then using 
(6.3) and (6.4) we can conclude that 
$$
d_i = p \, d_{i-1} \quad \text {and thus } \quad d_i = p^i \, d_0,
$$
where, by (6.2), $d_0 =3 $ when $p= 2,5$, and $d_0 =2$ when $p \neq 2,5$.  
Thus we have that

\begin{align}
\varphi^+_* (x_i) &= 0 \;\;\; \text{ if } \;\;  3 \cdot 2^{i-1} \leq g \;\;\; \text{ and } \;\; p = 2, \\
\varphi^+_* (y_i) &= 0 \;\;\; \text{ if } \;\;  3 \, p^i \leq g       \;\;\; \text{ and } \;\; p = 5, \\
\varphi^+_* (y_i) &= 0 \;\;\; \text{ if } \;\;  2 \, p^i \leq g       \;\;\; \text{ and } \;\; p \neq 5.  
\end{align}

\noindent
This gives an improvement on (6.5) and (6.6) for primes other than 2 and 5. 

\medskip

As $x_i= 0$ for $2^i>m$ and $y_i=0$ for $2 \, p^i>m$, 
our computations show that the map in homology induced by $\varphi^+$
is zero in unstable homology of positive degree for 
characteristics $p \neq 2, 5$. When $p=2$ or $p=5$, we cannot always
decide with the above methods 
whether the top dimensional $x_i$ and  $y_i$ classes are mapped to zero or not.

\bigskip

\noindent
%%%%%%%%%%%%%%%%%%%%%%%%%%%%%%%%%%%%%%%%%%%%%%%%%%%%%%%%%%%%%%%%%%%%%%%%%%%%
{\bf 6.5. Example:}
We now consider the geometric embedding (1.1) when  $m = 2g + 2$, $b = 2$ and $\alpha$ is
$$
\phi : B_{2g+2} \longrightarrow \Gamma _{g,2}.
$$
The $\cD$-algebra structure in this case glues the surfaces $\Sigma_{g_i,2}$
to two disks $\Sigma _{0, k+1}$ with $k$ holes. The resulting surface
has genus the sum of the genera $g_i$ plus $k-1$. Thus,
the inductive formula for $d_i$ is given by

$$
d_{i} = p \, d_{i-1} + p - 1 = p^i \, (d_0 + 1) - 1.
$$

\noindent
By (6.2) 
we have $d_0 = 3$ and $d_i = 4 \, p^i - 1$ for $p = 2, 5$,
and we have $d_0 = 2$ and $d_i = 3 \, p^i - 1$ for $p \neq 2, 5$.

\noindent
The $d_i$ are growing faster here then in Example 6.4. 
Indeed, as is easily checked, any class $x_i$ and $y_i$
that in this way can be shown to vanish under $\phi_*$ 
is already in the stable range.
Thus no extra information can be gained in addition to what is 
known by Theorem 5.1; compare (6.5) and (6.6).

\medskip

In particular, we cannot determine with our methods here 
whether 
(a) the top three $x_i$ classes
and whether
(b) 
the top two $y_i$ classes for $p = 3$ resp. 
the top $y_i$ class for $p \neq 3$
vanish under $\phi_*$. 
We have
$x_i \neq 0$ when $2^{i-1} -1 \leq g$, 
while $\phi_*(x_i) = 0$ 
when $2^{i+2} -1 \leq g$ 
and similarly  
$y_i \neq 0 $ when $p^{i-1} - 1 \leq g$, 
while $\phi_* (y_i) = 0$ 
when $4 p^i -1 \leq g$ in case $p =5$,
or when $3 p^i - 1 \leq g$ in case $p \neq 5$. 

\medskip

\noindent
Therefore the conclusions drawn in [SeT; Corollary 4.1]
(and [C; Corollary 2.7]) are too strong:
%%%%%%%%%%%%%%%%%%%%%%%%%%%%%%%%%%%%%%%%%%%%%%%%%%%%%%%%%%%%%%%%%%%%%%%%%%%%%%%%%
{\em It   remains an  open question whether for $g \geq 3$ the homomorphism
$\phi : B_{2g+2} \to \Gamma_{g,2}$ induces the zero map in reduced $\bF_p$-homology  
for all primes $p$.}
%%%%%%%%%%%%%%%%%%%%%%%%%%%%%%%%%%%%%%%%%%%%%%%%%%%%%%%%%%%%%%%%%%%%%%%%%%%%%%%%%%

%\vskip .3in
\bigskip

In our explicit calculations above we have concentrated on the generators $x_i$
 and $y_i$. But a similar analysis can  be given to determine  when
the image of  products is trivial. 

%\vskip .1in
\bigskip

Finally, an analogous study can be given for $\cD$-algebra maps 
from braid groups to mapping class groups of non-orientable mapping class 
groups in the case when $p\neq 2$. 
Note, in that case $H_1 (\cN _g; \bF_p) = 0$ for $g\geq 7$ by [K].

%%%%%%%%%%%%%%%%%%%%%%%%%%%%%%%%%%%%%%%%%%%%%%%%%%%%%%%%%%%%%%%%%%%%%%%%%%%%%%%%%%
\bigskip
\bigskip

%%%%%%%%%%%%%%%%%%%%%%%%%%%%%%%%%%%%%%%%%%%%%%%%%%%%%%%%%%%%%%%%%%%

\bigskip
\bigskip
\bigskip

%%%%%%%%%%%%%%%%%%%%%%%%%%%%%%%%%%%%%%%%%%%%%%%%%%
\noindent
\small Carl-Friedrich Bödigheimer \\  
\small Mathematisches Institut\\
\small Universit\"at Bonn \\
\small Endenicher Allee 60  \\
\small 53115 Bonn, Germany  \\   
\small boedigheimer@math.uni-bonn.de  \\[0.5cm]
\small Ulrike Tillmann \\
\small Mathematical Institute \\
\small Oxford University \\
\small 24-29 St Giles \\
\small Oxford OX1 3LB, United Kingdom \\
\small tillmann@maths.ox.ac.uk
%%%%%%%%%%%%%%%%%%%%%%%%%%%%%%%%%%%%%%%%%%%%%%%%%%%%%%

\end{document}